\newcommand{\vc}[1]{\boldsymbol{#1}}
\def\F{{\mathcal F}}
\def\Natural{{\mathbb{N}}}
\def\Real{\mathbb{R}}
\newcommand{\D}[1]{\mathrm{d}{#1}}
\def\E{{ \mathsf E}}
\def\var{{\mathsf{Var}}}
\def\cov{{\mathsf{Cov}}}
\def\logL{{\ell}}
\def\logLp{{\ell_{\rm p}}}
\def\pr{{\mathsf P}}
\def\v{{\epsilon}}
\newcommand{\registered}
   {{\scriptsize \ooalign{\hfil\raise0.07ex\hbox{\scriptsize \sc r}\hfil%
              \crcr\mathhexbox20D}}}
\newtheorem{theorem}{Theorem}
\newtheorem{lemma}{Lemma}
\newcommand{\nc}{\newcommand}
\nc{\ds}{\displaystyle}
\nc{\mbZ}{\mathbb Z}
\nc{\mbQ}{\mathbb Q}
\nc{\mbR}{\mathbb R}
\nc{\mbC}{\mathbb C}
\nc{\mbN}{\mathbb N}
\nc{\mbE}{\mathbb E}
\nc{\mbP}{\mathbb P}
\nc{\PH}{\emph{PH} }
\nc{\ME}{\emph{ME} }
\nc{\LST}{\emph{LST} }
\nc{\rank}{\mbox{rank\hspace{1pt}}}
\numberwithin{equation}{section}
\numberwithin{figure}{section}
\begin{document}

 \title{Parameter estimation for discretely-observed linear birth-and-death processes}
  \author{A.~C.~Davison\hspace{.2cm}\\
   \small{ Institute of Mathematics, Ecole Polytechnique F\'ed\'erale de Lausanne} \\~\\
    S.~Hautphenne \\
    \small{School of Mathematics and Statistics, The University of Melbourne;}\\
   \small{Institute  of Mathematics, Ecole Polytechnique F\'ed\'erale de Lausanne}\\ \\ A.~Kraus\\\small{Masaryk University} }

\date{}

  \maketitle

\begin{abstract}
Birth-and-death processes are widely used to model the development of biological populations. Although they are relatively simple models, their parameters can be challenging to estimate, because the likelihood can become numerically unstable when data arise from the most common sampling schemes, such as annual population censuses.  Simple estimators may be based on an embedded Galton-Watson process, but this presupposes that the observation times are equi-spaced.  We estimate the birth, death, and growth rates of a linear birth-and-death process whose population size is periodically observed via an embedded Galton--Watson process, and by  maximizing a saddlepoint approximation to the likelihood. We show that a Gaussian approximation to the saddlepoint-based likelihood connects the two approaches, we establish consistency and asymptotic normality of quasi-likelihood estimators, compare our estimators on some numerical examples, and apply our results to census data for two endangered bird populations and the H1N1 influenza pandemic.

\medskip
\noindent \textbf{Keywords}: Galton--Watson process; Gaussian approximation; likelihood inference; linear birth-and-death process; saddlepoint approximation.\end{abstract}

\section{Introduction}
\label{sec:intro}

 Linear birth-and-death processes are among the simplest and most widely-used continuous-time Markovian population models, with many applications in biology, genetics, and ecology \cite{karlin1975first,novozhilov2006biological}. Most estimation methods rely on continuous observation of the population, which allows maximum likelihood estimation of the birth and death rates $\lambda$ and $\mu$ \cite{keiding75}. Continuous measurement is rare in practice, however, and often observations are available only at discrete time points. For example, data in population biology typically arise from annual animal censuses. Likelihood-based estimation is then difficult owing to the cumbersome and numerically unstable form of the probability mass function of the population size.

Several authors have proposed approaches to address this instability, and more generally to estimate the parameters of discretely-observed \emph{general} birth-and-death processes, for which an analytic expression for the population size distribution may be unavailable. Chen and Hyrien \cite{chen2011quasi} propose quasi- and pseudo-likelihood estimators, but their empirical analysis suggests that these are inferior to the maximum likelihood estimator (MLE). Crawford and Suchard \cite{crawford2012transition} and Crawford \textit{et al.}~\cite{crawford2014estimation} use continued fractions to obtain expressions for the Laplace transform of the population size distribution.  They approach the computation of the likelihood as a missing data problem, and maximize it using an expectation-maximization (EM) algorithm. This is also used in Xu \textit{et al.}~\cite{xu2015likelihood}, who propose a multitype branching process approximation to birth-death-shift processes, but their method has numerical limitations in settings with very large populations. Ross \textit{et al.}~\cite{ross2006parameter} estimate discretely-observed density-dependent Markov population processes using an approximation based on a deterministic model combined with a Gaussian diffusion.

After sketching the properties of linear birth-and-death processes in Section~2,  we propose two alternative efficient ways to estimate their parameters from discrete-time data. The first approach, developed in Section~3, assumes equal inter-observation times, and uses the Galton--Watson process embedded at the successive observation times. We derive asymptotic properties of the resulting estimators for $\lambda$, $\mu$, and for the growth rate $\omega=\lambda-\mu$.  Keiding \cite{keiding75} used a similar approach, but under the additional assumption that one observes the number of individuals with no offspring at each generation in the embedded process. His estimators correspond to MLEs, while in our case only the estimator of $\omega$ is the MLE from the discretely-observed process.

Our second approach, detailed in Section 4, needs no constraint on the inter-observation times, and is based on  a saddlepoint approximation to the population size distribution, leading to saddlepoint MLEs. Saddlepoint methods provide accurate approximations in many applications: Daniels \cite{daniels1982saddlepoint} applies them to general birth processes; A{\i} and Yu \cite{ai2006saddlepoint} use them to construct closed-form
approximations to the transition densities of continuous-time Markov processes; 
and Pedeli \textit{et al.}~\cite{pedeli2015likelihood} use them to estimate high-order integer-valued autoregressive processes, and show the high accuracy of the resulting approximate MLEs. We highlight the gain in computational efficiency when using the saddlepoint technique to approximate the population size distribution compared to evaluating its analytical form, discuss the approximation error, show that the estimators obtained with the embedded Galton--Watson approach correspond to MLEs based on a Gaussian simplification of the saddlepoint approximation, and establish consistency and asymptotic normality of these estimators for unequal observation intervals. 

Simulations in Section 5 indicate that the relative error between the saddlepoint and true MLEs quickly becomes negligible as the observed population size increases. 
The saddlepoint MLEs always have lower mean square error than the Galton--Watson estimators, but they become comparable if one increases the number of discrete observations of a non-extinct population, or its size at the first observation time.  Section~6 illustrates our ideas with real censuses of two endangered bird populations and the H1N1 influenza pandemic.
All lengthy proofs may be found in Appendix A. 

\section{Discretely observed linear birth-and-death processes}


A linear birth-and-death process (LBDP) with birth rate $\lambda$ and death rate $\mu$ is a continuous-time Markovian branching process in which an individual lifetime is exponentially distributed with parameter $\mu$ and reproduction occurs according to a Poisson process with parameter $\lambda$. The population size increases from $k$ to $k+1$ at rate $k\lambda$, or decreases to $k-1$ at rate $k\mu$. The population growth rate, $\omega:=\lambda-\mu$, is also called the \emph{Malthusian parameter}. 

The conditional probability mass function of the population size $Z(t)$ at time $t$, given $Z(0)=a$, is denoted by ${p}_k(t;a):=\mathsf{P}\{Z(t)=k\,|\,Z(0)=a\}$ for $a,k\in \mathbb{Z}^+.$ The corresponding probability generating function (PGF) is
$f(s,t;a):=\sum_{k\geq 0} p_k(t;a) s^k=f(s,t)^a,$ where $f(s,t):=f(s,t;1)$
satisfies the backward Kolmogorov equation
$$
\frac{\partial f(s,t)}{\partial t}= \mu -(\lambda+\mu) f(s,t) +\lambda^2 f(s,t)^2,\qquad f(s,0)=s,
$$
and this has explicit solution
\begin{equation}\label{fst}
f(s,t)=\left\{\begin{array}{ll}1+\dfrac{(\lambda-\mu)(s-1)}{(\lambda s-\mu)\,\exp{(\mu-\lambda)t}-\lambda(s-1)},& \lambda\neq\mu,\\[1em]1+\dfrac{(s-1)}{1-\lambda t (s-1)},& \lambda=\mu.\end{array}\right.
\end{equation} 
The distribution of $Z(t)$ conditional on $Z(0)=1$ corresponds to a modified geometric distribution,
$p_k(t;1)=\alpha(t) \,\mathds{1}_{\{k=0\}}+\{1-\alpha(t)\} \{1-\beta(t)\} \beta(t)^{k-1}\,\mathds{1}_{\{k>0\}}$, where  $\mathds{1}_{\{\cdot\}}$ denotes the indicator function, with 
$$
\alpha(t)=\left\{\begin{array}{ll} \dfrac{\mu\{\exp{\omega t}-1\}}{\lambda \exp{\omega t}-\mu},& \lambda\neq\mu,\\[1em]\dfrac{\lambda t}{1+\lambda t},&\lambda=\mu,\end{array}\right.\qquad
\beta(t)=\left\{\begin{array}{ll} \dfrac{\lambda}{\mu}\alpha(t), & \lambda\neq\mu,\\[1em]
\alpha(t),& \lambda=\mu.\end{array}\right.
$$
Conditionally on $Z(0)=a$ and by independence of the individuals, $p_0(t;a)=\alpha(t)^a$, and for $k\geq 1$, by conditioning on the number of ancestors whose descendence becomes extinct by time $t$, we obtain \cite[p.~47, with some typos corrected here]{guttorp91}:
\begin{equation}\label{pmfpos}
p_k(t;a)=\sum_{j=\max(0,a-k)}^{a-1}  \binom {a}{j}  \binom {k-1}{a-j-1} \alpha(t)^j \,[\{1-\alpha(t)\} \{1-\beta(t)\}]^{a-j} \,\beta(t)^{k-a+j}.
\end{equation}
The conditional mean and variance of the population size at time $t$ are respectively
\begin{align}\label{mta}
m(t;a)&=\mathsf{E} \{Z( t) \, | \, Z(0)=a \} =a\,\exp(\omega t),\\ 
\label{sigta}
\sigma^2(t;a)&= \mathsf{Var} \{Z(t) \, | \, Z(0)=a \}  =\left\{\begin{array}{ll}a\, \frac{\lambda + \mu}{\omega}\, \exp(\omega t) \{\exp(\omega t) - 1\},& \lambda\neq\mu,\\[1em]a \,2\lambda t,& \lambda=\mu.\end{array}\right.
\end{align}
When $a=1$, we write $m(t):=m(t;1)$ and $\sigma^2(t):=\sigma^2(t;1)$.

Suppose that $M$ independent trajectories of an LBDP are discretely observed $N+1$ times, with $M,N\geq 1$, and for $i=1,\ldots, M$ and $j=0,\ldots, N$ let $t_{i,j}$ and $k_{i,j}$ denote the time and population size corresponding to the $(j+1)$st observation of the $i$th trajectory.  The log-likelihood based on observations $\vc t:=\{t_{i,j}\}$ and $\vc k:=\{k_{i,j}\}$ is  
\begin{eqnarray}
\ell(\lambda,\mu; \vc t,\vc k)\label{eq1}&= &\sum_{i=1}^M\log \mathsf{P}\{Z(t_{i,1})=k_{i,1},\ldots,Z(t_{i,N})=k_{i,N}\,|\,Z(0)=k_{i,0}\}\\
\label{eq2}&=&\sum_{i=1}^M\sum_{j=1}^N\log \mathsf{P}\{Z(t_{i,j}-t_{i,j-1})=k_{i,j}\,|\,Z(0)=k_{i,j-1}\},
 \end{eqnarray}
where \eqref{eq2} follows from the Markov property. Computing the MLEs of $\lambda$ and $\mu$ therefore requires the efficient calculation of ${p}_k(t;a)$ for any $a,k\in \mathbb{Z}^+$ and $t\in \mathbb{R}^+$. Despite the explicit form~\eqref{pmfpos} of these probabilities, the binomial coefficients for large values of $k$ or $a$ are very large, leading to numerical difficulties and making the maximization of \eqref{eq2} cumbersome, if not impossible. These issues, mentioned earlier in the literature (for example in \cite{darwin1956behaviour,chen2011quasi}), hamper the use of likelihood estimation of  $\lambda$ and $\mu$ from discrete observations. In the next two sections we propose methods to circumvent them. 

\section{Embedded Galton--Watson process approach}\label{sec.gwp}

If the inter-observation times all equal $\Delta t$, the discretely-observed LBDP corresponds to a Galton--Watson (GW) process with progeny generating function $P(s):=f(s,\Delta t)$  and offspring mean and variance $m:= m(\Delta t;1)$ and $\sigma^2:=\sigma^2(\Delta t;1)$, where $m(t;a)$ and $\sigma^2(t;a)$ are defined in~\eqref{mta} and~\eqref{sigta} \cite[p.~101]{harris2002theory}.
The birth and death rates may be expressed as
\begin{equation}
\label{lambda_m&sig}
\lambda = \frac{\log m}{2\Delta t} \left\{\frac{\sigma^2}{m(m-1)} +1 \right\},\quad \mu = \frac{\log m}{2\Delta t} \left\{ \frac{\sigma^2}{m(m-1)} - 1 \right\}, 
\end{equation} in the non-critical case, and 
\begin{equation}\label{lambda_mcrit} \lambda=\mu=\frac{\sigma^2}{2t}
\end{equation}in the critical case,
which allows simple estimation of them using the offspring mean and variance of~the~embedded GW process. We call this the \textit{GW approach}. We start by sketching the properties of these estimators based on a single trajectory, $M=1$. 

Based on the observation of the initial population size $Z_0:=Z(0)$ and the successive population sizes $Z_1:=Z(\Delta t), \ldots,Z_N:=Z(N\Delta t)$ in the $N$ subsequent generations of the embedded GW process,
the natural estimators of $m$ and $\sigma^2$ are 
\begin{equation}
\label{m_GW}
\hat{m}_{Z_0,N}  =  \frac{\sum_{n=1}^{N} Z_n}{\sum_{n=0}^{N-1} Z_n},\quad 
\widehat{\sigma^2}_{Z_0,N}  =  \frac{1}{N}\, \sum_{n=1}^{N} Z_{n-1} \left( \frac{Z_{n}}{Z_{n-1}} - \hat{m}_{Z_0,N}\right)^2.
\end{equation}
In particular, $\hat{m}_{Z_0,N}$ corresponds to a method of moments, conditional least squares and MLE of $m$  \cite[Chapter~2]{guttorp91}. There is a possibility of division by zero in $\widehat{\sigma^2}_{Z_0,N}$,  but if $Z_{n-1}=0$, then $Z_{n}=0$ as well, and we may set $Z_{n}/Z_{n-1}:=1$ so that the corresponding term vanishes. This only happens if the process becomes extinct.

Asymptotic properties of $\hat{m}_{Z_0,N}$ and $\widehat{\sigma^2}_{Z_0,N}$ have been studied extensively.  As $N\to\infty$, and conditionally on the eventual explosion of the process, $\{Z_N\to\infty\}$, both estimators are consistent and asymptotically normal \cite{guttorp91}:
\begin{eqnarray}
\label{m_gw_dist}
\sqrt{\frac{\sum_{n=1}^N Z_{n-1}}{\sigma^2}} \big( \hat m_{Z_0, N} - m \big) & \xrightarrow[]{d} & \mathcal{N}(0, 1), \\
\label{sig_gw_dist}
\sqrt{\frac{N}{2\sigma^4}} \big( \widehat{\sigma^2}_{Z_0, N} - \sigma^2 \big) & \xrightarrow[]{d} & \mathcal{N}(0, 1),
\end{eqnarray}
 where $\xrightarrow[]{d}$ denotes convergence in distribution. These results hold for any fixed value of $Z_0$ but only for supercritical processes ($m>1$), since otherwise eventual explosion has probability zero. 

If both $Z_0\to\infty$ and $N\to\infty$, and conditionally on $\{Z_N\to\infty\}$, it can be shown that the estimators are consistent and jointly asymptotically normal \cite{duby&rouault82}:
\begin{equation}
\label{m&sig_gw_dist}
\left(
\begin{matrix}
\sqrt{\frac{Z_0\,(m^N-1)}{\sigma^2\,(m-1)}}\, (\hat m_{Z_0, N} - m ) \\
\sqrt{\frac{N}{2\sigma^4}}(\widehat{\sigma^2}_{Z_0, N}-\sigma^2)\\
\end{matrix}
\right)
\xrightarrow[]{d} \mathcal{N}
\left(
\mathbf{0}, \mathbf{I} 
\right),
\end{equation}
where $\mathcal{N}(\mathbf{0}, \mathbf{I} )$ is the bivariate normal distribution with independent standard normal marginals.

If $Z_0\to\infty$ but $N$ is fixed, results on consistency and asymptotic normality are available for $\hat m_{Z_0, N}$ but not for $\widehat{\sigma^2}_{Z_0, N}$. For critical and subcritical processes, for which $m=1$ and $m<1$ respectively, results on consistency and marginal asymptotic normality of $\hat m_{Z_0, N}$ and $\widehat{\sigma^2}_{Z_0, N}$ are also available if both $Z_0\to\infty$ and $N\to\infty$, under additional assumptions on their relative speed of convergence \cite{guttorp91,dion&yanev94}. We shall not explore these cases further here.

\label{GW_M1}

On inserting the estimators $\hat m_{Z_0, N}$ and $\widehat{\sigma^2}_{Z_0,N}$ from~\eqref{m_GW} into~\eqref{lambda_m&sig}, we obtain estimators 
\begin{eqnarray}
\label{lambda_GW}
\hat\lambda_{Z_0, N} & = & \frac{\log \hat m_{Z_0, N}}{2\Delta t} \left\{\frac{\widehat{\sigma^2}_{Z_0, N}}{\hat m_{Z_0, N}(\hat m_{Z_0, N}-1)} +1 \right\},
\\
\label{mu_GW}
\hat\mu_{Z_0, N} & = & \frac{\log \hat m_{Z_0, N}}{2\Delta t} \left\{ \frac{\widehat{\sigma^2}_{Z_0, N}}{\hat m_{Z_0, N}(\hat m_{Z_0, N}-1)} - 1 \right\},
\end{eqnarray}
whose properties can be derived from those of $\hat m_{Z_0, N}$ and $\widehat{\sigma^2}_{Z_0,N}$, as shown in the next theorem, whose proof is in Appendix A.
\begin{theorem}
\label{lambda&mu_gw_dist}
Suppose that $m>1$. 
Then 
$ \hat\lambda_{Z_0, N} $ and $ \hat\mu_{Z_0, N} $ are consistent on $\{ Z_N\to\infty\}$ as $N\to\infty$, and also as both $N\to\infty$ and $Z_0\to\infty$. 

Moreover,
\[
\sqrt{N}\, \left( 
\begin{matrix}
\hat\lambda_{Z_0, N} - \lambda  \\
\hat\mu_{Z_0, N} - \mu 
\end{matrix}
\right)
\xrightarrow[]{d}
\mathcal{N}
\left\{
\left(
\begin{matrix}
0 \\
0 \\
\end{matrix}
\right)
,
\frac{(\log m)^2 \sigma^4}{2(\Delta t)^2 m^2(m-1)^2}
\times
\left(
\begin{matrix}
1 & 1 \\
1 & 1 \\
\end{matrix}
\right)
\right\}
\]
conditionally on $\{Z_N\to\infty\}$ as $N\to\infty$, and also as both $N\to\infty$ and $Z_0\to\infty$. 
\end{theorem}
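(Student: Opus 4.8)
The plan is to treat $(\hat\lambda_{Z_0,N},\hat\mu_{Z_0,N})$ as a smooth transformation of $(\hat m_{Z_0,N},\widehat{\sigma^2}_{Z_0,N})$ and to exploit the large gap between the convergence rates of the two underlying estimators. Write $g=(g_1,g_2)$ for the map defined by \eqref{lambda_m&sig}, so that $(\hat\lambda_{Z_0,N},\hat\mu_{Z_0,N})=g(\hat m_{Z_0,N},\widehat{\sigma^2}_{Z_0,N})$ and $(\lambda,\mu)=g(m,\sigma^2)$. Since $m>1$, the map $g$ is continuously differentiable in a neighbourhood of $(m,\sigma^2)$, because every denominator is bounded away from zero and $\log m>0$. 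Consistency of $\hat m_{Z_0,N}$ and $\widehat{\sigma^2}_{Z_0,N}$ on $\{Z_N\to\infty\}$, which underlies \eqref{m_gw_dist}--\eqref{m&sig_gw_dist}, then yields consistency of $\hat\lambda_{Z_0,N}$ and $\hat\mu_{Z_0,N}$ by the continuous mapping theorem, in both asymptotic regimes.

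For the asymptotic normality, the key observation is that $\partial g_1/\partial\sigma^2=\partial g_2/\partial\sigma^2=\log m/\{2\Delta t\,m(m-1)\}=:c$, so both components respond identically to perturbations of $\sigma^2$; their $m$-derivatives differ, but will turn out to be irrelevant at the $\sqrt N$ scale. A first-order Taylor expansion of $g$ about $(m,\sigma^2)$, whose quadratic remainder is $o_p(1)$ after multiplication by $\sqrt N$ (being dominated by $\sqrt N\,(\widehat{\sigma^2}_{Z_0,N}-\sigma^2)^2=O_p(N^{-1/2})$), gives
\[
\sqrt N\,(\hat\lambda_{Z_0,N}-\lambda)=\frac{\partial g_1}{\partial m}\,\sqrt N\,(\hat m_{Z_0,N}-m)+c\,\sqrt N\,(\widehat{\sigma^2}_{Z_0,N}-\sigma^2)+o_p(1),
\]
and likewise for $\hat\mu_{Z_0,N}$ with $\partial g_2/\partial m$ in place of $\partial g_1/\partial m$.

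The main step is to show that the $\hat m$-contribution is asymptotically negligible, i.e.\ $\sqrt N\,(\hat m_{Z_0,N}-m)\xrightarrow{p}0$; this is where supercriticality enters. Writing this quantity as the product of the $O_p(1)$ term in \eqref{m_gw_dist}, respectively \eqref{m&sig_gw_dist}, and the factor $\{N\sigma^2/\sum_{n=1}^N Z_{n-1}\}^{1/2}$, respectively $\{N\sigma^2(m-1)/[Z_0(m^N-1)]\}^{1/2}$, it suffices that this factor tends to zero in probability. On $\{Z_N\to\infty\}$ the embedded process grows geometrically: the martingale $Z_n/m^n$ converges almost surely to a positive limit, so $\sum_{n=0}^{N-1}Z_n$ grows like $m^N$, exponentially faster than $N$; in the joint regime the deterministic normalisation in \eqref{m&sig_gw_dist} makes the claim immediate since $m^N\gg N$. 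Slutsky's theorem then gives $\sqrt N\,(\hat m_{Z_0,N}-m)=o_p(1)$ in both regimes.

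Consequently both $\sqrt N\,(\hat\lambda_{Z_0,N}-\lambda)$ and $\sqrt N\,(\hat\mu_{Z_0,N}-\mu)$ collapse to the single random variable $c\,\sqrt N\,(\widehat{\sigma^2}_{Z_0,N}-\sigma^2)+o_p(1)$. From \eqref{sig_gw_dist}, $\sqrt N\,(\widehat{\sigma^2}_{Z_0,N}-\sigma^2)\xrightarrow{d}\mathcal N(0,2\sigma^4)$, so the common limit is centred normal with variance $2c^2\sigma^4=(\log m)^2\sigma^4/\{2(\Delta t)^2 m^2(m-1)^2\}$. Because the \emph{same} limiting variable drives both coordinates, the bivariate limit is perfectly correlated, producing exactly the rank-one covariance matrix $\bigl(\begin{smallmatrix}1&1\\1&1\end{smallmatrix}\bigr)$ stated in the theorem. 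I expect the main obstacle to be the rate bookkeeping: because $\hat m_{Z_0,N}$ and $\widehat{\sigma^2}_{Z_0,N}$ converge at radically different speeds, the ordinary multivariate delta method under a single normalisation does not apply, and one must argue componentwise through the Taylor expansion and Slutsky's theorem, relying on the geometric growth of the supercritical process to annihilate the $\hat m$-term.
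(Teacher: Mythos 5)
Your proposal is correct and follows essentially the same route as the paper's proof: both isolate the common $\sigma^2$-derivative $c=\log m/\{2\Delta t\,m(m-1)\}$, kill the $\hat m$-contribution at the $\sqrt N$ scale via $N/\sum_{n=1}^N Z_{n-1}\to0$ (respectively $N/\{Z_0(m^N-1)\}\to0$), and read off the rank-one limit from \eqref{sig_gw_dist}. The only cosmetic difference is that the paper uses an exact algebraic three-term decomposition of $\hat\lambda_{Z_0,N}-\lambda$ with delta-method arguments for $\log\hat m$ and $\log\hat m/\{\hat m(\hat m-1)\}$, whereas you use a first-order Taylor expansion with a controlled quadratic remainder.
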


By Theorem~\ref{lambda&mu_gw_dist}, the difference $
\sqrt{N}\, \big\{ (\hat \lambda_{Z_0, N} - \lambda) - (\hat \mu_{Z_0, N} - \mu) \big\}$
vanishes in probability conditional on $\{ Z_N\to\infty \}$ as $N\to\infty$ and also as both $N\to\infty$ and $Z_0\to\infty$.
Thus, the estimator $
\hat{\omega}_{Z_0,N} = \hat \lambda_{Z_0, N} - \hat \mu_{Z_0, N}$
of the Malthusian parameter $\omega$ satisfies 
$\sqrt{N}(\hat{\omega}_{Z_0,N}-\omega) \xrightarrow[]{p} 0$
conditionally on $\{Z_N\to\infty\}$ as $N\to\infty$, and also as both $N\to\infty$ and $Z_0\to\infty$, where $\xrightarrow[]{p}$ denotes the convergence in probability.
This is no surprise, as  $\hat{\omega}_{Z_0,N} = \log (\hat m_{Z_0, N})/\Delta t$ is a function of $\hat m_{Z_0, N}$ only. Asymptotic normality for $\hat{\omega}_{Z_0,N}$ is therefore achieved at a higher rate of convergence than for $\hat{\lambda}_{Z_0,N}$ and $\hat{\mu}_{Z_0,N}$.
\begin{theorem}
Suppose that $m>1$. 
Then 
$ \hat\omega_{Z_0, N} $ is consistent on $\{ Z_N\to\infty\}$ as $N\to\infty$ and also as both $N\to\infty$ and $Z_0\to\infty$. 

Moreover,
\[
\sqrt{\sum_{n=1}^N Z_{n-1}}\times \big( \hat \omega_{Z_0, N} - \omega \big) 
\xrightarrow[]{d} 
\mathcal{N}
\left\{
0, \frac{\sigma^2}{m^2 (\Delta t)^2} 
\right\}
\]
conditionally on $\{Z_N\to\infty\}$ as $N\to\infty$, and
\[
\sqrt{Z_0\,(m^N-1)}\times \big( \hat \omega_{Z_0, N} - \omega \big) 
\xrightarrow[]{d} 
\mathcal{N}
\left\{
0,  \frac{\sigma^2 (m-1)}{m^2 (\Delta t)^2} 
\right\}
\]
conditionally on $\{Z_N\to\infty\}$ as $N\to\infty$ and $Z_0\to\infty$.
\end{theorem}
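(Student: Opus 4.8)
The plan is to exploit that $\hat\omega_{Z_0,N}$ is a smooth function of $\hat m_{Z_0,N}$ alone and to transport the known limit theorems~\eqref{m_gw_dist} and~\eqref{m&sig_gw_dist} for $\hat m_{Z_0,N}$ by the delta method. Writing $g(x):=\log(x)/\Delta t$, we have $\hat\omega_{Z_0,N}=g(\hat m_{Z_0,N})$, while $m=\exp(\omega\Delta t)$ gives $\omega=g(m)$; the map $g$ is continuously differentiable on $(0,\infty)$ with $g'(x)=1/(x\Delta t)$, so $g'(m)=1/(m\Delta t)$.

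For consistency, I would first observe that~\eqref{m_gw_dist} forces $\hat m_{Z_0,N}-m=O_{\mathrm{p}}\{(\sum_{n=1}^N Z_{n-1})^{-1/2}\}$, and on $\{Z_N\to\infty\}$ the supercriticality $m>1$ makes $\sum_{n=1}^N Z_{n-1}\to\infty$ almost surely, so $\hat m_{Z_0,N}\xrightarrow[]{p} m$; the same conclusion follows from~\eqref{m&sig_gw_dist} in the joint regime. The continuity of $g$ at $m>0$ and the continuous mapping theorem then yield $\hat\omega_{Z_0,N}=g(\hat m_{Z_0,N})\xrightarrow[]{p}g(m)=\omega$ in both asymptotic regimes.

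For the two central limit statements I would apply the delta method separately in each regime. The mean value theorem gives $g(\hat m_{Z_0,N})-g(m)=g'(\xi_N)(\hat m_{Z_0,N}-m)$ for some $\xi_N$ between $\hat m_{Z_0,N}$ and $m$. Rewriting~\eqref{m_gw_dist} as $\sqrt{\sum_{n=1}^N Z_{n-1}}(\hat m_{Z_0,N}-m)\xrightarrow[]{d}\mathcal{N}(0,\sigma^2)$ and multiplying by $g'(\xi_N)$ produces the limit $\mathcal{N}\{0,\sigma^2 g'(m)^2\}=\mathcal{N}\{0,\sigma^2/(m^2(\Delta t)^2)\}$. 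Likewise, rewriting the first component of~\eqref{m&sig_gw_dist} as $\sqrt{Z_0(m^N-1)}(\hat m_{Z_0,N}-m)\xrightarrow[]{d}\mathcal{N}\{0,\sigma^2(m-1)\}$ yields $\mathcal{N}\{0,\sigma^2(m-1)g'(m)^2\}=\mathcal{N}\{0,\sigma^2(m-1)/(m^2(\Delta t)^2)\}$, matching the stated variances.

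The one point requiring care is that the normalizing factors $\sqrt{\sum_{n=1}^N Z_{n-1}}$ and $\sqrt{Z_0(m^N-1)}$ are random, so the textbook delta method with deterministic rates does not apply verbatim. I would handle this through the mean value representation above: since $\hat m_{Z_0,N}\xrightarrow[]{p}m$, the intermediate point satisfies $\xi_N\xrightarrow[]{p}m$, hence $g'(\xi_N)\xrightarrow[]{p}g'(m)$, a constant. Slutsky's theorem then combines this convergence in probability with the convergence in distribution of the randomly normalized $\hat m_{Z_0,N}-m$, so the random normalization is absorbed cleanly and the constant factor $g'(m)$ scales the Gaussian limit exactly as claimed. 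This is the main, though modest, obstacle; the remainder is routine bookkeeping of the variance constants.
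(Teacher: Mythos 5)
Your proposal is correct and follows exactly the paper's route: the paper's own proof is the one-line observation that consistency and asymptotic normality of $\hat\omega_{Z_0,N}=\log(\hat m_{Z_0,N})/\Delta t$ follow from those of $\hat m_{Z_0,N}$ via the delta method, and your write-up simply fills in the details (the correct derivative $g'(m)=1/(m\Delta t)$, the resulting variances, and the Slutsky argument for the random normalization), all of which check out.
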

\begin{proof}
Consistency of $\hat\omega_{Z_0, N}$ follows from consistency of $\hat m_{Z_0, N}$;
asymptotic normality of $\hat\omega_{Z_0, N}$ follows from asymptotic normality of $\hat m_{Z_0, N}$ and application of the delta method.
\end{proof}

Suppose now that we observe multiple independent trajectories of the LBDP, i.e., $M>1$, and that the inter-observation times are constant and equal $\Delta t$ for all trajectories, so that we observe $M$ independent trajectories of the embedded GW process.
Let $Z_{i, n}:= Z_i(n\Delta t)$ be the $n^{\text{th}}$ observation of the $i^{\text{th}}$ trajectory for $i\in\{ 1, \ldots, M\}$ and $n\in\{ 0, \ldots, N\}$. The estimators of the offspring mean and variance of the GW process are then
\begin{equation}
\label{m_GW_M}
\hat{m}_{M,N} = \frac{\sum_{i=1}^{M}\sum_{j=1}^{N} Z_{i,j}}{\sum_{i=1}^M\sum_{j=-1}^{N} Z_{i,j-1}}, \quad 
\widehat{\sigma^2}_{M,N} = \frac{1}{MN}\, \sum_{i=1}^{M}\sum_{j=1}^{N} Z_{i,j-1} \left( \frac{Z_{i,j}}{Z_{i,j-1}} - \hat{m}_{M,N}\right)^2.
\end{equation} 
Again, we set $0/0:=1$ if any of the trajectories becomes extinct. These estimators appear not to have been analysed previously. We derive them in Section~\ref{gauss}, and establish the consistency and asymptotic normality of the corresponding estimators for $\lambda$ and $\mu$.

\section{Saddlepoint approximation}

Saddlepoint approximation offers a very accurate way to approximate probability distributions \cite{daniels1954saddlepoint,butler2007saddlepoint}. It is particularly useful when these distributions are intractable, or when their numerical evaluation is unstable, as in our context. 

The conditional {cumulant generating function} (CGF) of the population size $Z(t)$ at time $t$, given $Z(0)=a$, is defined as $K(x,t;a):=\log M(x,t;a),$ where $M(x,t;a):=f(\exp(\omega t),t;a)$ is the conditional moment generating function. Its maximal convergence set in a neighbourhood of $0$ is 
 $\mathcal{S}(t)=(-\infty,\log R(t))$, where 
 $$R(t)=\left\{\begin{array}{ll}1+\dfrac{\omega}{\lambda \{\exp(\omega t)-1\} },& \lambda\neq\mu,\\[1em]1+\dfrac{1}{\lambda t},& \lambda=\mu.\end{array}\right.$$ 
When $k=0$ we have $p_0(t;a)=\alpha(t)^a$ and no approximation is needed. For $k=1,2,\ldots$, the (first-order) saddlepoint approximation of $p_k(t;a)$ is 
\begin{equation}\label{spa_bd} \tilde{p}_k(t;a)=\dfrac{1}{\sqrt{2\pi K''(\tilde{x},t; a)}}\exp\{K(\tilde{x},t; a)-\tilde{x} k\},\end{equation}where $\tilde{x}=\tilde{x}(k,t;a)$, called the saddlepoint, corresponds to the unique solution in $\mathcal{S}(t)$ to the equation
\begin{equation}\label{saddle_equ1}
K'(\tilde{x},t;a):=\frac{\partial}{\partial x}K(x,t;a)\Big|_{x=\tilde{x}}=k.
\end{equation}
Since $K(x,t;a)=a K(x,t;1)$, \eqref{spa_bd} can be rewritten as 
\begin{equation}\label{spa_bd_a} \tilde{p}_k(t;a)=\dfrac{1}{\sqrt{2\pi aK''(\tilde{x},t; 1)}}\exp\{a[K(\tilde{x},t; 1)-\tilde{x} (k/a)]\},\end{equation}where $\tilde{x}=\tilde{x}(k/a,t;1)$ is the unique solution in $\mathcal{S}(t)$ to the equation
\begin{equation}\label{saddle_equ2}
K'(\tilde{x},t;1)=k/a.
\end{equation}
The next lemma shows that analytical expressions can be obtained for the saddlepoint $\tilde{x}(k,t;a)$ and the saddlepoint approximation $\tilde{p}_k(t;a)$ in the LBDP case.

\begin{lemma} \label{explicit}In the LBDP case, $\tilde{x}(k,t;a)$ and $\tilde{p}_k(t;a)$ can be expressed explicitly as $\tilde{x}(k,t;a)=\log\tilde{s}$, where
$\tilde{s}:=\tilde{s}(k,t;a)=(2A)^{-1}[ -B+\sqrt{B^2-4 AC}],$ with, for $\lambda\neq \mu$,
$$ \begin{array}{lclcl}A&:=&A(t)&=&\lambda\{m(t)-1\}\{\lambda-\mu m(t)\},\\
B&:=&B(k,t;a)&=&2\lambda\mu\{1+m(t)^2-m(t)-(a/k)m(t)\}\\&&&&+m(t)(\lambda^2+\mu^2)\{(a/k)-1\},\\
 C&:=&C(t)&=&\mu\{m(t)-1\}\{\mu-\lambda m(t)\},\end{array}$$leading to \begin{eqnarray}\label{spa_bd3} \tilde{p}_k(t;a)&=&\dfrac{1}{\sqrt{2\pi a}}\dfrac{1}{\tilde{s}^k} \left\{\dfrac{\mu-\lambda \tilde{s} +\mu(\tilde{s}-1)\, m(t)}{\mu-\lambda \tilde{s} +\lambda(\tilde{s}-1)\, m(t) }\right\}^a\times\\\nonumber&&\left\{-\frac{\{m(t)-1\} m(t) \tilde{s} (\lambda-\mu)^2 \left\{-\lambda^2 \tilde{s}^2+\lambda m(t) \mu \left(\tilde{s}^2-1\right)+\mu^2\right\}}{\{\lambda [m(t) (\tilde{s}-1)-\tilde{s}]+\mu\}^2 \{\lambda \tilde{s}+\mu [-m(t) \tilde{s}+m(t)-1]\}^2}\right\}^{-1/2}, \end{eqnarray}and for $\lambda=\mu$,
$A:=A(t)=\lambda t-(\lambda t)^2$, $B:=B(k,t;a)=2(\lambda t)^2+(a/k)-1$, $ C:=C(t)=-\lambda t-(\lambda t)^2$,  leading to
\begin{eqnarray}\nonumber \tilde{p}_k(t;a)&=&\dfrac{1}{\sqrt{2\pi a}}\dfrac{1}{\tilde{s}^k}  \left\{ 
\dfrac{\lambda t (1-\tilde{s})+\tilde{s}}{1-\lambda t(\tilde{s}-1)}
\right\}^a\left\{  \frac{\lambda t \tilde{s} \left(-\lambda t \tilde{s}^2+\lambda t+\tilde{s}^2+1\right)}{\{\lambda t (\tilde{s}-1)-1\}^2 (-\lambda t \tilde{s}+\lambda t+\tilde{s})^2}\right\}^{-1/2}. \qquad\end{eqnarray}
\end{lemma}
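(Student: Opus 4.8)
The plan is to exploit the fact that the moment generating function is a linear-fractional (M\"obius) function of $s=e^{x}$, so that the cumulant generating function and all its derivatives are explicit rational functions and the saddlepoint equation collapses to a quadratic.

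First I would substitute $s=e^{x}$ into \eqref{fst} and use $m(t)=\exp(\omega t)$ to write, in the non-critical case,
$$
f(e^{x},t)=\frac{P(s)}{Q(s)},\qquad P(s)=\{\lambda-\mu\, m(t)\}\,s+\mu\{m(t)-1\},\qquad Q(s)=-\lambda\{m(t)-1\}\,s+\lambda m(t)-\mu ,
$$
both affine in $s$. Since $K(x,t;a)=a\,K(x,t;1)$ with $K(x,t;1)=\log f(e^{x},t)$ and $\mathrm{d}s/\mathrm{d}x=s$, the saddlepoint equation \eqref{saddle_equ2} reads $s\,f'(s)/f(s)=k/a$, that is $a\,s\,\{P'(s)Q(s)-P(s)Q'(s)\}=k\,P(s)Q(s)$. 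Because $P$ and $Q$ are affine, their Wronskian $P'Q-PQ'$ equals the constant $m(t)\,\omega^{2}$, so $P(s)Q(s)$ is the only quadratic term and the equation reduces to $A\tilde s^{2}+B\tilde s+C=0$; reading off the coefficients and dividing through by $-k$ reproduces exactly the stated $A$ and $C$ and, after collecting the $a/k$ terms, $B$.

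Next I would pin down the admissible root. The saddlepoint must satisfy $\tilde x\in\mathcal{S}(t)$, equivalently $\tilde s\in(0,R(t))$, and since $K(\cdot,t;1)$ is strictly convex, $K'$ is strictly increasing, so the equation has a unique solution there. Evaluating the quadratic at the endpoints $s=0$ and the pole $s=R(t)$ of $f$ shows that precisely one of the two roots lies in $(0,R(t))$, which fixes the sign $-B+\sqrt{B^{2}-4AC}$ in the formula for $\tilde s$.

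Finally I would assemble \eqref{spa_bd_a}. The factor $e^{-\tilde x k}=\tilde s^{-k}$ is immediate; the term $e^{K(\tilde x,t;a)}=f(\tilde s)^{a}$ gives the first brace to the power $a$, once $P(\tilde s)/Q(\tilde s)$ is rearranged into the displayed form; and $K''(\tilde x,t;a)=a\,K''(\tilde x,t;1)$ supplies both the $1/\sqrt{2\pi a}$ prefactor and the second brace, using $K''(x,t;1)=s\,h'(s)+s^{2}h''(s)$ with $h=\log f$. The critical case $\lambda=\mu$ is handled identically: \eqref{fst} again yields a M\"obius $f$, now with $P(s)=(1-\lambda t)s+\lambda t$ and $Q(s)=-\lambda t\,s+(1+\lambda t)$ (whence $P'Q-PQ'=1$), and the same steps produce the stated $A$, $B$, $C$ and $\tilde p_k(t;a)$. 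I expect the main obstacle to be the closed-form evaluation of $K''(\tilde x,t;1)$: it is the second derivative of a rational function at a root of the quadratic, so reducing it to the compact rational expression in the second brace requires repeatedly eliminating $\tilde s^{2}$ through $A\tilde s^{2}+B\tilde s+C=0$, and this is the most laborious part of the argument.
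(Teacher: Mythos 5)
Your proposal is correct and follows essentially the same route as the paper's proof: both reduce the saddlepoint equation to the quadratic $A\tilde s^2+B\tilde s+C=0$ via the explicit rational form of $K'(x,t;1)$ in $s=e^{x}$ (your constant-Wronskian observation $P'Q-PQ'=m(t)\omega^2$ is just a tidier way of obtaining the paper's numerator $m(t)(\lambda-\mu)^2 s$), select the root in $(0,R(t))$ by sign checks at the endpoints, and assemble $\tilde p_k$ from $f(\tilde s)^a$, $\tilde s^{-k}$ and the chain-rule expression $K''=s h'(s)+s^2h''(s)$. The only inessential difference is your anticipated need to eliminate $\tilde s^2$ via the quadratic when simplifying $K''$; the paper simply leaves $K''(\tilde x,t;1)$ as the direct rational expression in $\tilde s$ displayed in the lemma, so no such reduction is required.
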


The approximations $\tilde{p}_k(t;a)$ may not sum to unity for $k=0,1,2,\ldots$, and the {normalised} saddlepoint approximation is given by
\begin{equation}\label{nspa_bd}
\bar{p}_k(t;a)=\left\{\begin{array}{ll} p_0(t;a) & k=0,\\[1em]\{1-p_0(t;a)\}\;\dfrac{\tilde{p}_k(t;a)}{\sum_{j\geq 1} \tilde{p}_j(t;a) } & k\geq 1.
\end{array}\right.
\end{equation} 
However, as mentioned in Daniels \cite{daniels1982saddlepoint}, renormalisation complicates likelihood estimation.

\begin{figure}[t]
\begin{subfigure}{.54\textwidth}
  \centering
  \includegraphics[width=0.92\linewidth]{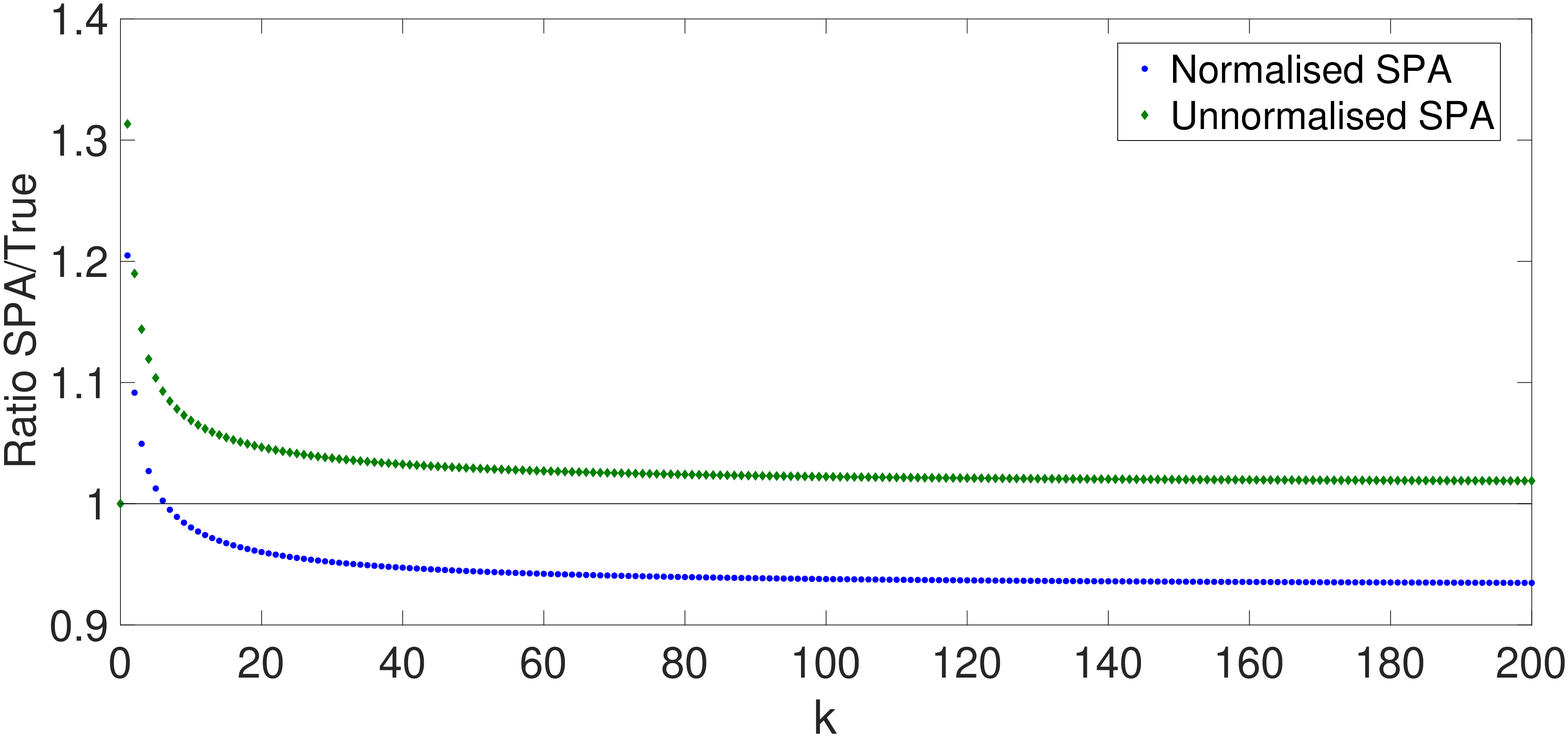}
  \label{fig:sfig1}
\end{subfigure}%
\begin{subfigure}{.54\textwidth}
  \centering
  \includegraphics[width=0.92\linewidth]{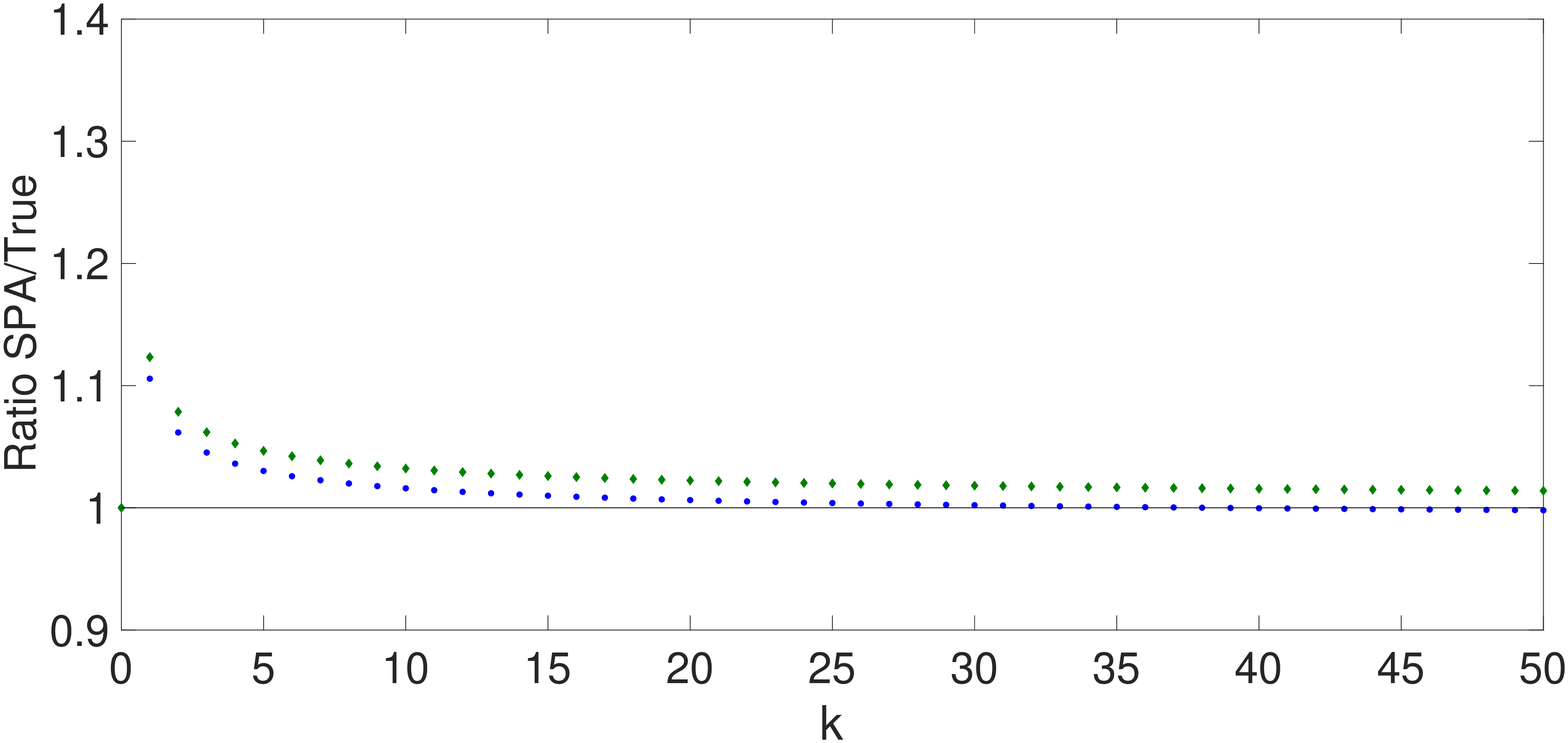}
  \label{fig:sfig2}
\end{subfigure}
\caption{\label{f3b} Ratio between the saddlepoint approximations and the exact probability mass function computed from \eqref{pmfpos} for the population size after $t=1$ in an LBDP with $\lambda=7$ and $\mu=5$ and initial population sizes $a=5$ (left) and $a=20$ (right).} 
\end{figure}

\begin{table}[t]
\centering
\begin{tabular}{ccccccc}
$a$& $5$ & $10$ & $15$ & $20$ & $50$ & $100$\\
\hline
SPA  & 5.9 & 5.7 & 5.9 &6.0 &6.3 & 6.9\\
Exact & 138.9 & 188.6 & 312.3 & 413.7 & 1402.1 & 4939.7
\end{tabular}
\caption{\label{cpu}Average CPU time (ms, based on 100 runs, using Matlab on a 2.6 GHz Intel Core i5 laptop) to compute the saddlepoint approximation (SPA) and the exact probability mass function for population sizes $k=0,1,\ldots,200$ at $t=1$ in an LBDP with $\lambda=7$ and $\mu=5$, as a  function of $Z(0)=a$.}
\end{table}

Figure~\ref{f3b} shows that the ratio between the normalised and unnormalised saddlepoint approximations and the exact probability mass function converges to unity as the initial population sizes $a$ increases: the approximation improves as the numerical computation of the true distribution becomes more challenging. Table~\ref{cpu} compares the CPU times to compute the saddlepoint approximation $\tilde{p}_{k}(1;10)$ and the probability mass function ${p}_{k}(1;10)$ for $k=1$ to 200  and for some parameter values: in contrast with the saddlepoint approximation, the computational burden of the exact distribution increases with $Z(0)$. 

\begin{figure}[t]
\begin{subfigure}{.53\textwidth}
  \centering
  \includegraphics[width=0.92\linewidth]{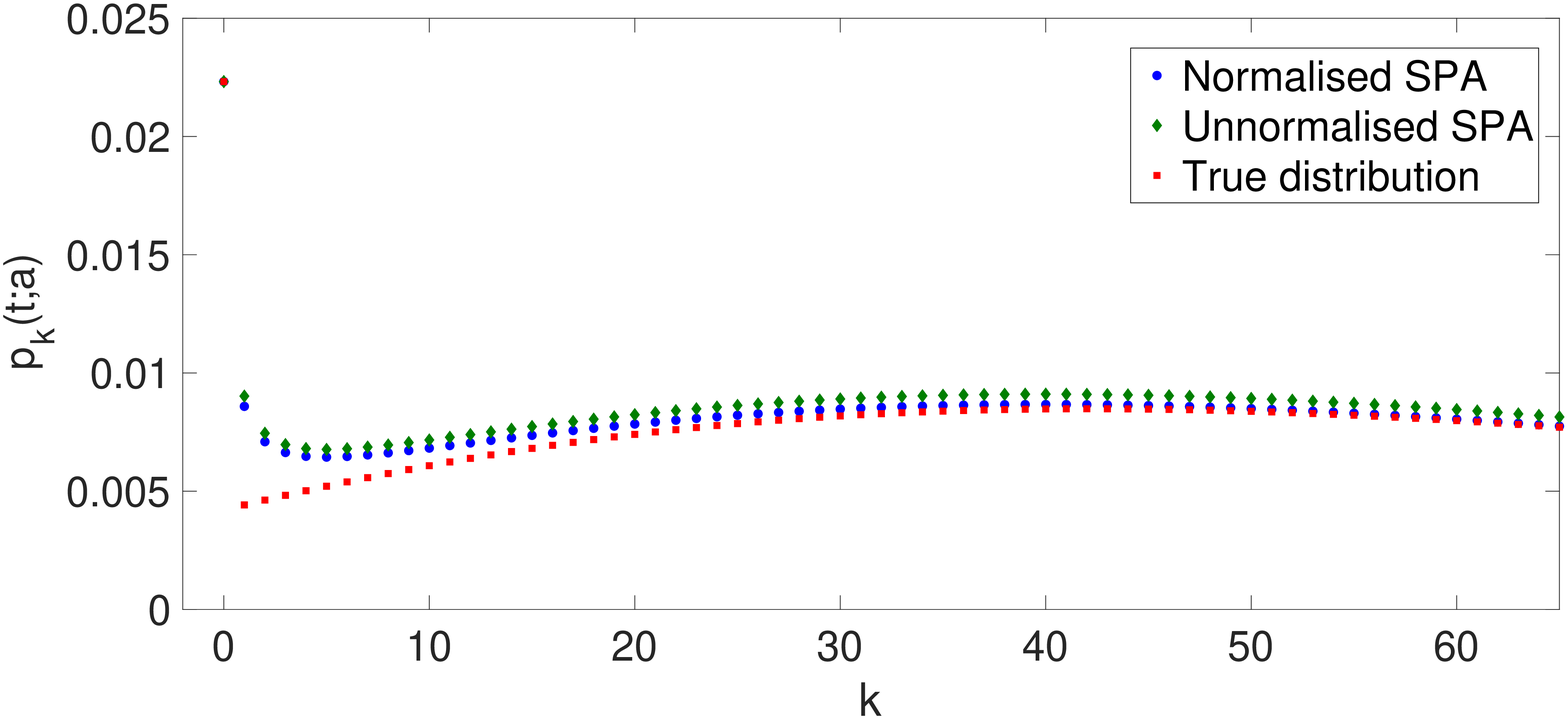}
  \label{fig:sfig1}
\end{subfigure}%
\begin{subfigure}{.53\textwidth}
  \centering
  \includegraphics[width=0.92\linewidth]{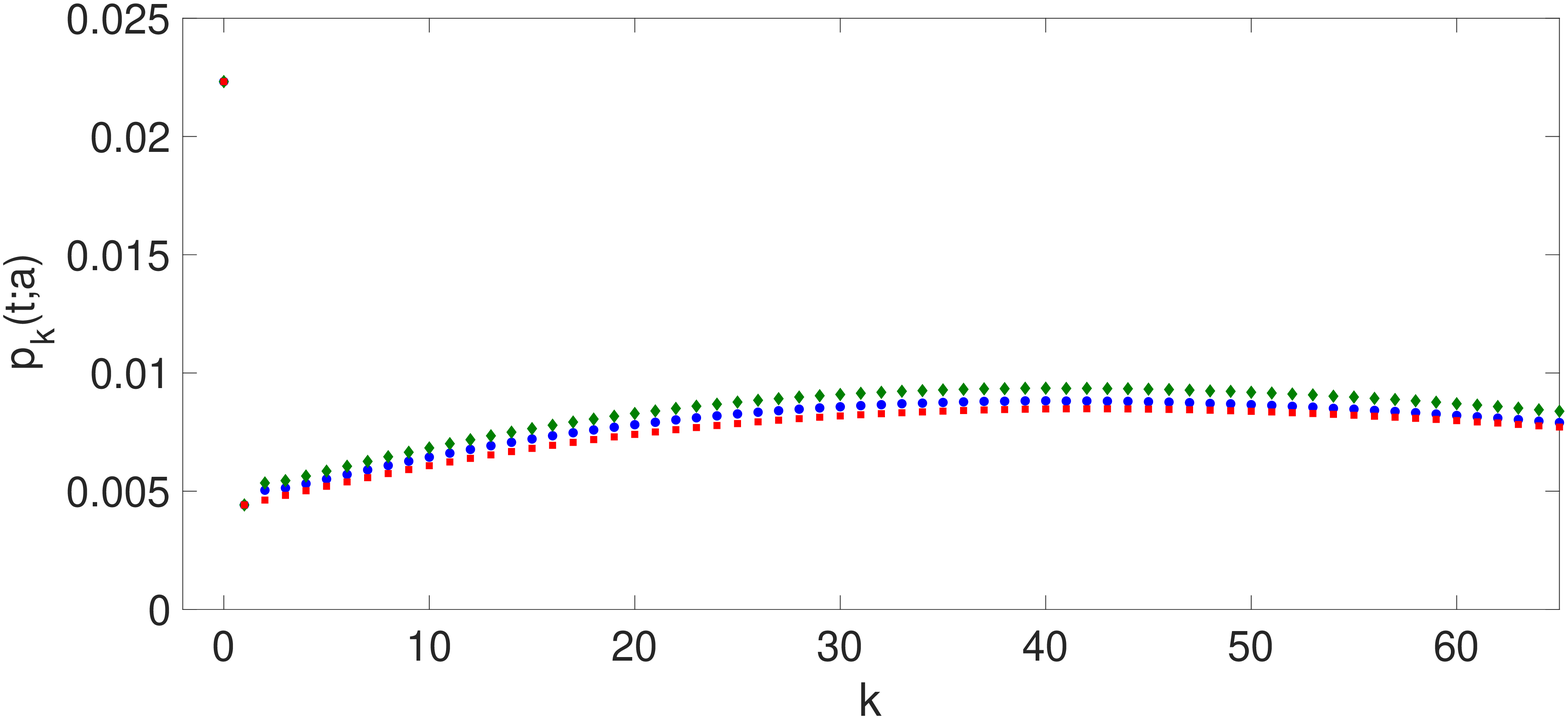}
  \label{fig:sfig2}
\end{subfigure}
\caption{\label{f3}Probability mass function of the population size after $t=1$ in an LBDP with $\lambda=7$ and $\mu=5$ and initial population size $a=10$, together with its saddlepoint approximation $\tilde{p}_{k}(1;10)$ and the normalised saddlepoint approximation $\bar{p}_{k}(1;10)$. Left: unadjusted saddlepoint approximation; Right: conditional saddlepoint approximation.} 
\end{figure}

The left-hand panel of Figure~\ref{f3}
shows that when there is a large gap between $p_0(t;a)$ and $p_1(t;a)$, the saddlepoint approximation has a smoothing effect around $k=0$, which decreases as $|p_0(t;a)-p_1(t;a)|$ decreases, i.e., as the initial population size $a$ increases. The smoothing can be adjusted by applying the saddlepoint technique to the \emph{conditional} CGF, given that the population is not extinct at time $t$, 
$$
K\{x,t;a\,|\,Z(t)>0\}=\log\{M(x,t;a)-p_0(t;a)\}-\log\{1-p_0(t;a)\}, 
$$
though the resulting saddlepoint $\tilde{x}(t,a,k)$ must be obtained numerically. The right-hand panel of Figure~\ref{f3} shows the adjusted saddlepoint approximation. In practice, if  $a$ and/or $k$ are large, this correction is not necessary.

The saddlepoint approximated log-likelihood is 
\begin{equation}\label{spall}\tilde{\ell}(\lambda,\mu; \vc t,\vc k)=\sum_{i=1}^M \sum_{j=1}^N \log \tilde{p}_{k_{i,j}}(\tau_{i,j};k_{i,j-1}),\end{equation}where $\tau_{i,j}:=t_{i,j}-t_{i,j-1}$, and the values $\tilde{{\lambda}}$ and $\tilde{\mu}$ maximising this expression are called the \emph{saddlepoint maximum likelihood estimators} (SPMLEs). We call this the \textit{SPMLE approach}.

As indicated by Figure~\ref{f3b}, the error of the saddlepoint approximation decreases as the initial population size $a$ increases; more precisely (see for example \cite[Chapter 12]{davison2003statistical}),
\begin{equation}
\label{err_spa}{p}_k(t;a)=\tilde{p}_k(t;a)\{1+\mathcal{O}(1/a)\},
\end{equation} 
and this leads to the following lemma.

\begin{lemma}\label{lem_err_1}The error in the saddlepoint approximated log-likelihood is$${\ell}(\lambda,\mu; \vc t,\vc k)-\tilde{\ell}(\lambda,\mu; \vc t,\vc k)=\mathcal{O}\left\{\left(\min_{1\leq i\leq M,1\leq j\leq N}\{k_{i,j-1}\}\right)^{-1}\right\}.$$\end{lemma}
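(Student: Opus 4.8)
The plan is to reduce the global error to a term-by-term comparison and then invoke the relative error bound \eqref{err_spa}. Writing the exact log-likelihood in the factored form \eqref{eq2}, so that $\ell(\lambda,\mu;\vc t,\vc k)=\sum_{i=1}^M\sum_{j=1}^N\log p_{k_{i,j}}(\tau_{i,j};k_{i,j-1})$ with $\tau_{i,j}=t_{i,j}-t_{i,j-1}$, and subtracting the saddlepoint log-likelihood \eqref{spall}, I obtain
$$
\ell(\lambda,\mu;\vc t,\vc k)-\tilde\ell(\lambda,\mu;\vc t,\vc k)=\sum_{i=1}^M\sum_{j=1}^N\left\{\log p_{k_{i,j}}(\tau_{i,j};k_{i,j-1})-\log\tilde p_{k_{i,j}}(\tau_{i,j};k_{i,j-1})\right\}.
$$
The essential observation is that the starting size of the $(i,j)$th factor is $a=k_{i,j-1}$, which is precisely the parameter governing saddlepoint accuracy: since a process started from $a$ individuals is a sum of $a$ independent copies of the process started from one individual and $K(x,t;a)=aK(x,t;1)$, the quantity $a$ plays the role of the sample size in the saddlepoint expansion, so each factor is approximated with relative error controlled by $k_{i,j-1}$.

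Next I would apply \eqref{err_spa} to each factor with $a=k_{i,j-1}$, $k=k_{i,j}$ and $t=\tau_{i,j}$, giving $p_{k_{i,j}}(\tau_{i,j};k_{i,j-1})=\tilde p_{k_{i,j}}(\tau_{i,j};k_{i,j-1})\{1+\mathcal{O}(1/k_{i,j-1})\}$; the factors with $k_{i,j}=0$ are exact, since then $p_0(t;a)=\alpha(t)^a$ and no approximation is made, so they contribute nothing to the difference. Taking logarithms and using $\log(1+x)=x+\mathcal{O}(x^2)$ as $x\to 0$, each bracketed difference equals $\log\{1+\mathcal{O}(1/k_{i,j-1})\}=\mathcal{O}(1/k_{i,j-1})$. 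Summing over the $MN$ terms, and recalling that $M$ and $N$ are fixed, gives
$$
\ell(\lambda,\mu;\vc t,\vc k)-\tilde\ell(\lambda,\mu;\vc t,\vc k)=\sum_{i=1}^M\sum_{j=1}^N\mathcal{O}(1/k_{i,j-1})=\mathcal{O}\left\{\left(\min_{1\leq i\leq M,\,1\leq j\leq N}k_{i,j-1}\right)^{-1}\right\},
$$
because each of the finitely many summands is at most a constant multiple of $(\min_{i,j}k_{i,j-1})^{-1}$ and the factor $MN$ is absorbed into the order symbol.

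The main obstacle is not the arithmetic but the uniformity underlying these steps. The logarithm bound $\log(1+x)=\mathcal{O}(x)$ is valid only for $x$ bounded away from $-1$, so I need the relative errors to be genuinely small, which holds once every $k_{i,j-1}$ is large, consistent with the asymptotic regime $\min_{i,j}k_{i,j-1}\to\infty$. More delicate is checking that the $\mathcal{O}(1/a)$ constant in \eqref{err_spa} can be chosen uniformly over the finitely many transitions and over the realized ratios $k_{i,j}/k_{i,j-1}$; this requires the saddlepoint $\tilde x(k/a,\tau;1)$ to stay in the interior of the convergence set $\mathcal{S}(\tau)$ and the derivatives of the CGF to remain bounded there, which follows from the explicit smooth expressions in Lemma \ref{explicit}. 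Since the number of terms is fixed, a single worst-case constant then suffices to combine the per-term bounds into the stated order.
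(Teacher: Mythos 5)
Your proposal is correct and follows essentially the same route as the paper's own proof: decompose the log-likelihood difference transition by transition via \eqref{eq2} and \eqref{spall}, apply \eqref{err_spa} with $a=k_{i,j-1}$ to each factor, use $\log\{1+\mathcal{O}(1/k_{i,j-1})\}=\mathcal{O}(1/k_{i,j-1})$, and bound the finite sum by the reciprocal of the minimum. Your additional remarks on the uniformity of the $\mathcal{O}(1/a)$ constants are a sensible refinement that the paper leaves implicit.
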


\begin{proof}We have
\begin{eqnarray*}{\ell}(\lambda,\mu; \vc t,\vc k)-\tilde{\ell}(\lambda,\mu; \vc t,\vc k)&=&
\sum_{i=1}^M\sum_{j=1}^N \log\{{p}_{k_{i,j}}(\tau_{i,j};k_{i,j-1})/\tilde{p}_{k_{i,j}}(\tau_{i,j};k_{i,j-1})\}\\&=&\sum_{i=1}^M\sum_{j=1}^N \log\{1+\mathcal{O}(1/k_{i,j-1})\}\quad\mbox{by \eqref{err_spa}}\\&=&\sum_{i=1}^M\sum_{j=1}^N \mathcal{O}(1/k_{i,j-1})\\&=&\mathcal{O}\left\{\left(\min_{1\leq i\leq M,1\leq j\leq N}\{k_{i,j-1}\}\right)^{-1}\right\}.\end{eqnarray*}\end{proof}
Every observed population size in every independent trajectory, except that for the last observation time, plays a role in the order of magnitude of the approximation error. For exploding trajectories where $k_{j,i}\geq k_{j,i-1}$, the error is dominated by the inverse population sizes $k_{j,0}$ at the first observation times, but  for trajectories with a minimum population size close to zero, the error can become non-negligible. In this case,  direct computation of the transition probabilities will be least onerous.  
Alternatively, it may be beneficial to use a multivariate version of the saddlepoint approximation, as described in the Appendix.

\section{Gaussian approximations}\label{gauss}

Second-order Taylor expansion of the CGF $K(x,t;a)$ at $x=0$ yields
\begin{equation}\label{soaK}
K(x,t;a)\approx x K'(0,t;a)+ \frac{x^2}{2} K''(0,t;a)=x m(t;a)+ \frac{x^2}{2} \sigma^2(t;a),
\end{equation} 
and first-order Taylor expansion of $K'(x,t;a)$ at $x=0$ yields
\begin{equation}\label{foaKp}
K'(x,t;a)\approx  K'(0,t;a)+ x K''(0,t;a)= m(t;a)+x \sigma^2(t;a).
\end{equation} 
Using \eqref{saddle_equ1} and \eqref{foaKp}, the saddle point $\tilde{x}(k,t;a)$ can be approximated as
\begin{equation}\label{spaa}
\tilde{x}(k,t;a)\approx \dfrac{k- m(t;a)}{\sigma^2(t;a)},
\end{equation}and using \eqref{soaK} and \eqref{spaa}, the exponent in~\eqref{spa_bd} can be approximated as
\begin{eqnarray}
K(\tilde{x},t; a)-\tilde{x} k &\approx & \tilde{x} \{m(t;a)-k\}+\dfrac{\tilde{x}^2}{2}\sigma^2(t;a)\\&\approx& -\dfrac{\{k-m(t;a)\}^2}{2 \sigma^2(t;a)}.
\end{eqnarray}
Thus the saddlepoint approximation to the conditional probability mass function is roughly
\begin{equation}\label{spaGauss} 
\tilde{\varphi}_k(t;a):=\left\{\dfrac{1}{2\pi \sigma^2(t;a) }\right\}^{1/2} \exp\left[-\dfrac{\{k-m(t;a)\}^2 }{2 \sigma^2(t;a)}\right], 
\end{equation}
i.e.,  the Gaussian density function with mean $m(t;a)$ and variance $\sigma^2(t;a)$. 

This Gaussian approximation also arises from the central limit theorem. If $Z(0)=a$, then $Z(t)\overset{d}{=}\sum_{i=1}^a X_i(t)$, where the i.i.d.\ random variables $X_i(t)$ represent the population size in an LBDP that starts with a single individual at time zero, and have mean and variance $m(t;1)$ and $\sigma^2(t;1)$. 
By the central limit theorem, 
$Z(t)\overset{d}{\approx} \mathcal{N}\{a m(t;1),a\sigma^2(t;1)\}=\mathcal{N}\{m(t;a),\sigma^2(t;a)\}$ for large values of $a$, so $p_k(t;a)\approx \tilde{p}_k(t;a)\approx \tilde{\varphi}_k(t;a)$. However, the saddlepoint approximation $\tilde{p}_k(t;a)$ retains much more information than the Gaussian approximation $\tilde{\varphi}_k(t;a)$, whose only ingredients are the first two moments of the population size.

We now establish a connection between the SPMLE and the GW approaches, and a natural extension of the latter to unequal inter-observation times. The proof of the next lemma can be found in Appendix A.

\begin{lemma} \label{SPA_GW}
If the inter-observation times are equal within and between the trajectories, then the MLEs for $\lambda$ and $\mu$ resulting from~\eqref{spaGauss} coincide with the GW estimators.
\end{lemma}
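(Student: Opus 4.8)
The plan is to write down the log-likelihood induced by the Gaussian approximation \eqref{spaGauss} in the equally-spaced case, reparameterize it in terms of the offspring mean and variance $(m,\sigma^2)$ rather than $(\lambda,\mu)$, and then show that its stationarity (score) equations are solved exactly by the Galton--Watson moment estimators $\hat m_{M,N}$ and $\widehat{\sigma^2}_{M,N}$ of \eqref{m_GW_M}. Since in this case $L$ depends on the parameters only through $(m,\sigma^2)$, invariance of the MLE under the smooth bijective reparameterization $(\lambda,\mu)\leftrightarrow(m,\sigma^2)$ supplied by \eqref{lambda_m&sig} then transfers the conclusion back to $(\lambda,\mu)$.

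First I would specialize the moments to a common inter-observation interval $\Delta t$. By \eqref{mta}--\eqref{sigta} and the branching structure, $m(\Delta t;k)=k\,m$ and $\sigma^2(\Delta t;k)=k\,\sigma^2$ with $m=m(\Delta t;1)$ and $\sigma^2=\sigma^2(\Delta t;1)$, so substituting into \eqref{spaGauss} the contribution of the transition from $Z_{i,j-1}$ to $Z_{i,j}$ is
\[
-\tfrac12\log(2\pi)-\tfrac12\log(\sigma^2 Z_{i,j-1})-\frac{(Z_{i,j}-Z_{i,j-1}\,m)^2}{2\sigma^2 Z_{i,j-1}}.
\]
Summing over $i=1,\dots,M$ and $j=1,\dots,N$ gives the Gaussian approximated log-likelihood $L(m,\sigma^2)$, in which the terms $\log Z_{i,j-1}$ are additive constants that play no role in the optimization.

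Next I would solve the two score equations. Differentiating $L$ with respect to $m$ yields $\sigma^{-2}\sum_{i,j}(Z_{i,j}-Z_{i,j-1}m)$, whose unique root $\hat m=\sum_{i,j}Z_{i,j}/\sum_{i,j}Z_{i,j-1}$ is independent of $\sigma^2$ and is precisely $\hat m_{M,N}$. Differentiating with respect to $\sigma^2$ and evaluating at $\hat m$ gives $\widehat{\sigma^2}=(MN)^{-1}\sum_{i,j}(Z_{i,j}-Z_{i,j-1}\hat m)^2/Z_{i,j-1}$, and rewriting $(Z_{i,j}-Z_{i,j-1}\hat m)^2/Z_{i,j-1}=Z_{i,j-1}(Z_{i,j}/Z_{i,j-1}-\hat m)^2$ identifies this with $\widehat{\sigma^2}_{M,N}$. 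A brief check that the stationary point is the global maximum completes this part: for fixed $\sigma^2>0$, $L$ is a concave quadratic in $m$ with leading coefficient $-\tfrac12\sigma^{-2}\sum_{i,j}Z_{i,j-1}<0$, and the profiled objective in $\sigma^2$ has the standard one-parameter Gaussian form $-\tfrac{MN}{2}\log\sigma^2-\tfrac{c}{2\sigma^2}$ with $c>0$, maximized uniquely at the moment estimator.

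The individual steps are routine Gaussian maximum-likelihood computations, so the point requiring care is the reparameterization argument. I must confirm that $(\lambda,\mu)\mapsto(m,\sigma^2)$ is a bijection onto its range, with inverse given explicitly by \eqref{lambda_m&sig} in the non-critical case (and \eqref{lambda_mcrit} in the critical case), and that the maximizer $(\hat m_{M,N},\widehat{\sigma^2}_{M,N})$ lies in that range, so that invariance legitimately returns the GW estimators $\hat\lambda_{M,N},\hat\mu_{M,N}$ obtained by plugging $(\hat m_{M,N},\widehat{\sigma^2}_{M,N})$ into \eqref{lambda_m&sig}. The only genuine obstacle is therefore this bookkeeping at the interface of the critical and non-critical parameterizations, rather than any analytic difficulty; the substance of the lemma is the exact algebraic coincidence of the Gaussian score solutions with the GW moment estimators established in the preceding step.
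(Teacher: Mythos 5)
Your proposal is correct and follows essentially the same route as the paper's proof: both write the Gaussian approximated log-likelihood in terms of the offspring mean and variance $(m,\sigma^2)$, solve the two score equations, and identify the solutions with the estimators~\eqref{m_GW_M}. The extra care you take over the global maximum and the invariance under the reparameterization $(\lambda,\mu)\leftrightarrow(m,\sigma^2)$ is left implicit in the paper but is a sensible addition.
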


%
%

If the inter-observation times are unequal, then the approximation~\eqref{spaGauss}, in which $m(t;a)$ and $\sigma^2(t;a)$ are replaced by their expressions \eqref{mta} and \eqref{sigta} in terms of $\lambda$, $\mu$, and the inter-observation times, can still be used to obtain estimators for $\lambda$ and $\mu$. Letting $\omega=\lambda-\mu$ and $\xi=\lambda+\mu$, the logarithm of \eqref{spaGauss} can be rewritten for any $t$ as
\begin{eqnarray}\nonumber
 \log\tilde{\varphi}_k(t;a\,|\,(\omega,\xi))&=&-\frac{1}{2}\log\{2\pi a (\xi/\omega) \exp(\omega t)[\exp(\omega t)-1]\}\\\label{spaGauss_gen}&& -\dfrac{\omega \{k-a \exp(\omega t)\}^2 }{2 a \xi \exp(\omega t)\{\exp(\omega t)-1\}},
\end{eqnarray}
so \eqref{spall} can be approximated by
\begin{eqnarray}\nonumber 
\tilde{\tilde{\ell}}(\omega,\xi; \vc t,\vc k)&=& -\frac{1}{2}\sum_{i=1}^M\sum_{j=1}^N \log\{2\pi k_{i,j-1} (\xi/\omega) \exp(\omega \tau_{i,j})[\exp(\omega \tau_{i,j})-1]\}\\&&\quad -\frac{\omega}{2  \xi}\sum_{i=1}^M \sum_{j=1}^N \dfrac{\{k_{i,j}-k_{i,j-1}\, \exp(\omega \tau_{i,j})\}^2 }{k_{i,j-1} \exp(\omega \tau_{i,j})\{\exp(\omega \tau_{i,j})-1\}},\label{GA_SPA}
\end{eqnarray}
where $\tau_{i,j}=t_{i,j}-t_{i,j-1}$. The resulting MLEs generalise the GW estimators to unequal inter-observation times. The proof of the next theorem is provided in Appendix A.

\begin{theorem}
\label{GASPAthm}
Suppose that $M$ independent replicates of data are available from an LBDP with $\lambda>0$, potentially observed at times $0<t_{i,1}<\cdots<t_{i,N_i}$, where $N_i\geq 1$ and $t_{i,j}-t_{i,j-1}> \epsilon >0$ for $i=1,\ldots, M$ and all $j$.  Suppose that observation for each replicate stops at the earlier of its extinction time $T_i$ and after $N_i$ observations, and that the conditional increments of the process have finite fourth moments almost surely.  Then the estimators of $\lambda$ and $\mu$ resulting from maximisation of~\eqref{GA_SPA} are consistent and asymptotically normally distributed as $M\to\infty$, with covariance matrix given in~\eqref{D.eq}.


%

\end{theorem}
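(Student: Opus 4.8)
The plan is to treat the maximiser of~\eqref{GA_SPA} as an M-estimator---equivalently, a quasi-likelihood estimator whose first two conditional moments are correctly specified---and to exploit independence across the $M$ replicates by invoking estimating-equation asymptotics for independent but \emph{non}-identically-distributed summands. Writing $\theta=(\omega,\xi)$, let $U_i(\theta)$ be the gradient in $\theta$ of the $i$th replicate's contribution to~\eqref{GA_SPA}, so that $\hat\theta_M$ solves $\sum_{i=1}^M U_i(\theta)=\vc 0$; each $U_i$ is itself a sum over the observation index $j$, up to the random index at which replicate $i$ is truncated by extinction or by $N_i$, of score terms of the Gaussian log-density~\eqref{spaGauss_gen}. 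Since the transformation $(\omega,\xi)\mapsto(\lambda,\mu)=\{(\omega+\xi)/2,(\xi-\omega)/2\}$ is linear and smooth, the statement for $\lambda$ and $\mu$ will follow from that for $(\omega,\xi)$ by the delta method.

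First I would verify that the estimating equations are conditionally unbiased at the true value $\theta_0=(\omega_0,\xi_0)$. Differentiating~\eqref{spaGauss_gen} and using that the Gaussian working model matches the exact conditional mean~\eqref{mta} and variance~\eqref{sigta}, the conditional expectation of each score term given $k_{i,j-1}>0$ vanishes, because $\E\{k_{i,j}-m(\tau_{i,j};k_{i,j-1})\mid k_{i,j-1}\}=0$ and $\E[\{k_{i,j}-m(\tau_{i,j};k_{i,j-1})\}^2-\sigma^2(\tau_{i,j};k_{i,j-1})\mid k_{i,j-1}]=0$. Thus, within each replicate the score terms form a martingale-difference sequence for the natural filtration, an optional-stopping argument absorbs the extinction truncation, and $\E\{U_i(\theta_0)\}=\vc 0$; independence of the replicates then makes $\sum_i U_i(\theta_0)$ a sum of independent mean-zero vectors.

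For consistency I would combine a uniform law of large numbers for the normalised objective $M^{-1}$ times~\eqref{GA_SPA} with an identifiability argument: the Cesàro-averaged limiting estimating equation has $\theta_0$ as its unique root, since the mean function $a\exp(\omega t)$ identifies $\omega$ and, given $\omega$, the variance function then identifies $\xi$. For asymptotic normality I would expand $\vc 0=M^{-1/2}\sum_i U_i(\hat\theta_M)$ about $\theta_0$, obtaining $\sqrt{M}(\hat\theta_M-\theta_0)=A_M^{-1}\,M^{-1/2}\sum_i U_i(\theta_0)+o_p(1)$, where $A_M=-M^{-1}\sum_i\E\{\nabla_\theta U_i(\theta_0)\}$ is the sensitivity matrix. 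The finite-fourth-moment assumption ensures each $U_i(\theta_0)$ has finite variance---its entries are quadratic in the centred increments---and supplies the uniform integrability needed for a Lindeberg condition, so the Lindeberg--Feller central limit theorem gives $M^{-1/2}\sum_i U_i(\theta_0)\xrightarrow[]{d}\mathcal{N}(\vc 0,B)$ with $B=\lim_M M^{-1}\sum_i\var\{U_i(\theta_0)\}$, and Slutsky's theorem yields the sandwich covariance $A^{-1}BA^{-1}$ reported in~\eqref{D.eq}.

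The main obstacle is the non-identical distribution of the replicates, caused by the heterogeneous schedules $\{t_{i,j}\}$, the varying $N_i$, and above all the random truncation at extinction. These rule out a direct appeal to i.i.d.\ theory and force one to (i) control the Cesàro averages $A_M$ and $B_M$ and show they converge to nonsingular limits $A$ and $B$, (ii) verify a Lindeberg or Lyapunov condition for a triangular array rather than a fixed sequence, and (iii) ensure that score terms near extinction, where $\sigma^2(\tau;a)$ is small for small $a$, do not destabilise the sums---exactly the point at which the lower bound $t_{i,j}-t_{i,j-1}>\epsilon$ and the restriction to $k_{i,j-1}>0$ are used.
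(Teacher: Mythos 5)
Your asymptotic-normality half tracks the paper's own Part~1 almost exactly: the paper likewise treats the maximiser of~\eqref{GA_SPA} as a quasi-likelihood estimator in $(\xi,\omega)$ whose working Gaussian model matches the exact conditional mean~\eqref{mta} and variance~\eqref{sigta}, shows the score has zero expectation by exhibiting the within-replicate score terms as a martingale and applying the optional stopping theorem at the truncation time $T$, uses the finite-fourth-moment hypothesis to make the score covariance $\vc{\mathrm{C}}(\vc\theta_0)$ finite and $\vc{\mathrm{I}}(\vc\theta_0)$ nonsingular, and ends with the sandwich covariance and the linear map to $(\lambda,\mu)$ as in~\eqref{D.eq}. (If anything the paper is less fussy than you about heterogeneity across replicates: it invokes van der Vaart's Theorems~5.41--5.42 for $M$ independent copies with only a remark that infinitely many observation schedules must be non-trivial, whereas you correctly flag the triangular-array issue --- but you also leave the convergence of the Cesàro averages $A_M$, $B_M$ unproved.)

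The genuine gap is in your consistency step. You propose a uniform law of large numbers plus a population-level identifiability claim, but you establish neither: the parameter set $\Theta=\{(\xi,\omega):\xi+\omega>0,\ \xi-\omega>0\}$ is open and unbounded, so a Wald-type argument needs compactification or a well-separated-maximum condition that you do not supply, and the existence of the Cesàro limits you would average against is exactly the obstacle you list without resolving. The paper avoids all of this by a different, self-contained device: its Part~2 proves that for \emph{every realized data set} the profile log-likelihood $\logLp(\omega)=\logL(\hat\xi_\omega,\omega)$ has at most one stationary point. Concretely, each term $(k_j-k_{j-1}\zeta_j)^2/(k_{j-1}\nu_j)$ with $k_j>0$ is written as $4k_j\tau_j^{-1}g(\omega\tau_j,b_j)$ with $g(u,b)=c(u)\sinh^2\{(u-b)/2\}$ and $c(u)=u/\{\exp(u)-1\}$; one checks that $g'(\cdot,b)$ is strictly increasing with its only zero at $u=b$, handles the terminal $k_T=0$ term separately, and combines this with the strict convexity of $\kappa=-\log c$ to conclude that $\D{}\logLp(\omega)/\D{\omega}=0$ has a unique root. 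That uniqueness is precisely what van der Vaart's Theorem~5.42 needs to upgrade ``a consistent sequence of roots exists'' to ``the computed root is consistent,'' with no uniform convergence of the criterion required. Without either that uniqueness argument or a completed ULLN, your proposal does not yet deliver consistency.
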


\section{Simulations}

To compare the SPMLE with the  MLE, we simulated independent non-extinct trajectories of LBDPs with $\lambda=7$ and $\mu=5$, with various initial population sizes. For each trajectory, we recorded the population sizes at $N+1=20$ time points and used them to calculate the SPMLE and MLE.  In about a fifth of the trajectories the observed population size exceeded 600 individuals and the  MLE could not be computed. The relative error was never greater than 5\%, and only exceeded 2\% for minimum population sizes smaller than 5. The relative error for $\omega$ is generally smaller than 0.01\%, so the saddlepoint approximation errors in the SPMLEs of $\lambda$ and $\mu$ appear to cancel for $\hat{\lambda}-\hat{\mu}$. 
%
The maximum observed population size has no impact on the CPU time necessary to compute the SPMLE, which was around 0.1s in each case, whereas the CPU time to obtain the exact MLE increases roughly linearly with the population size at rate around 0.2 s/individual.

We ran a series of simulation experiments, either increasing the number $N$ of discrete-time observations, or increasing the number $M$ of independent observed trajectories. 
We first simulated 100 replicates of a single ($M=1$) non-extinct trajectory of an LBDP with $\lambda=7$ and $\mu=6$, starting with $Z(0)=10$ individuals, and we computed the bias, standard deviation and root mean square error (RMSE) of $\hat{\lambda}$ and $\hat{\omega}$ for $N=10, \ldots, 60$. Figure~\ref{fsim1} depicts the results for $\hat{\lambda}$. Both biases decrease rapidly as $N$ increases, with the SPMLE having smaller bias than the GW estimator. The gap between the standard deviation and RMSE of the GW and SPMLE estimator decreases as $N$ increases.

%

\begin{figure}[t]
\begin{subfigure}{.54\textwidth}
  \centering
  \includegraphics[width=0.92\linewidth]{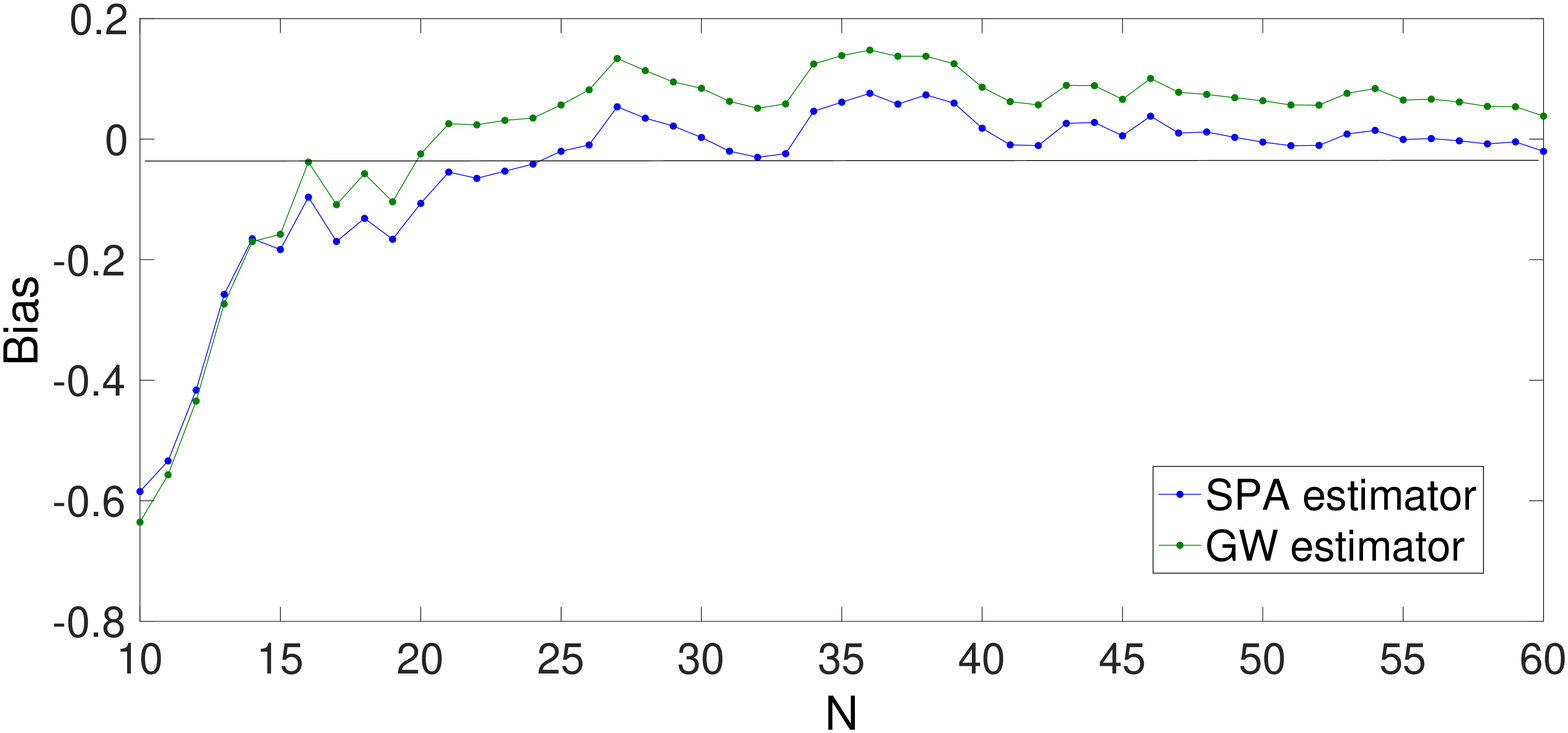}
  \label{fig:sfig1}
\end{subfigure}%
\begin{subfigure}{.54\textwidth}
  \centering
  \includegraphics[width=0.92\linewidth]{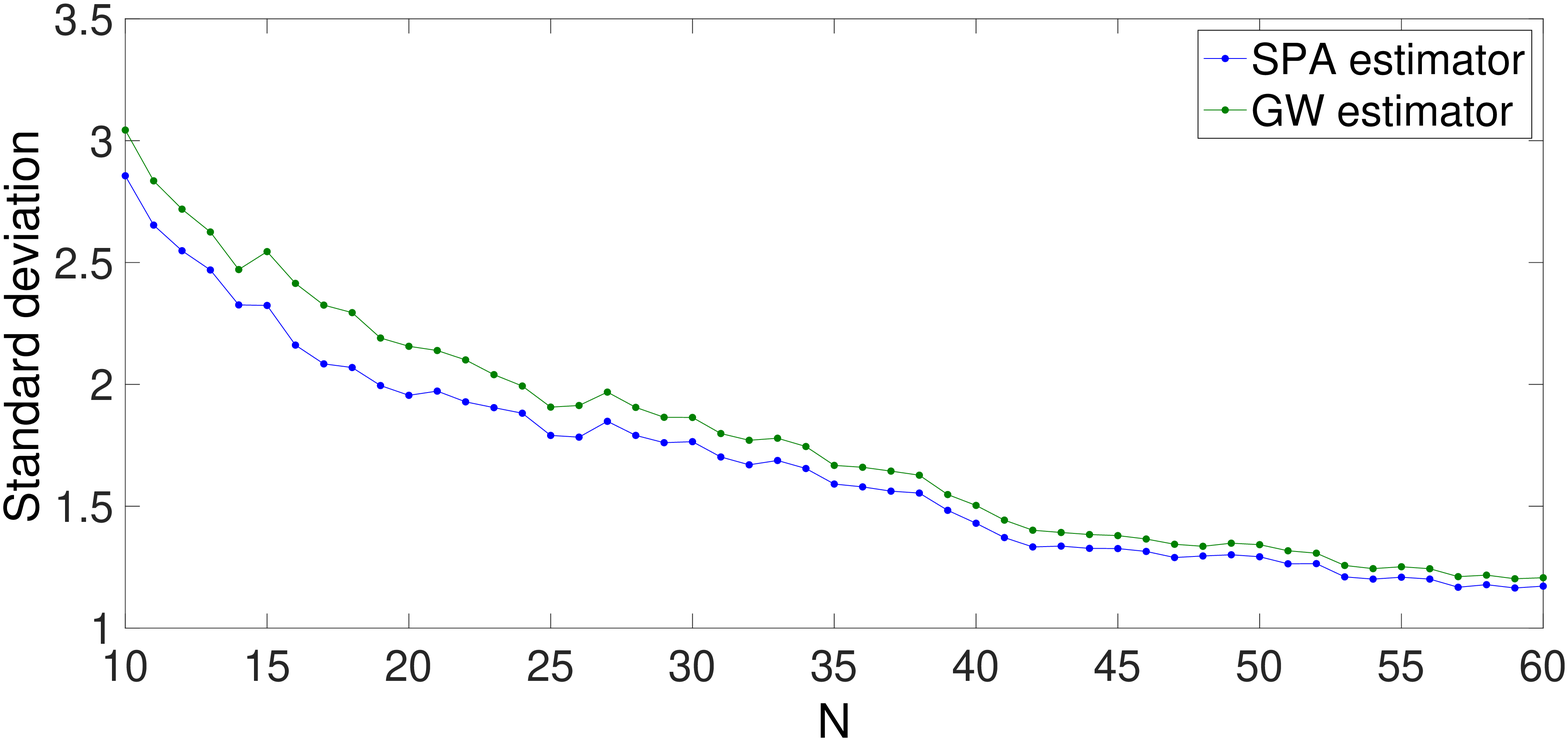}
  \label{fig:sfig2}
\end{subfigure}
\centering
\begin{subfigure}{.54\textwidth}
  \includegraphics[width=0.92\linewidth]{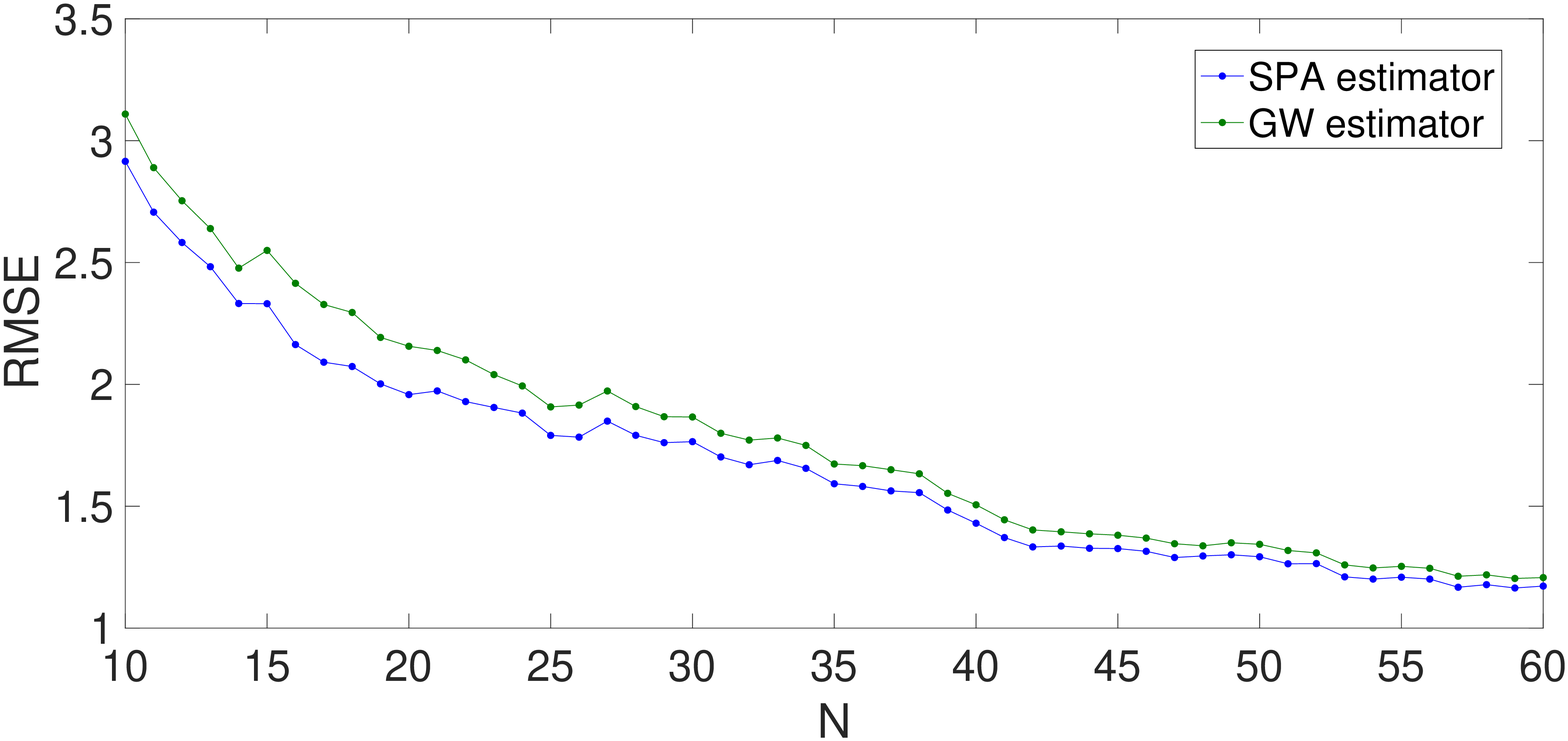}
  \label{fig:sfig2}
\end{subfigure}
\caption{\label{fsim1} Properties of $\hat{\lambda}$ as $N$ increases: Bias, standard deviation and root mean square error for an LBDP with $\lambda=7, \mu=6$, $Z(0)=Z_0=10$. Results based on 100 simulations of a single ($M=1$) non-extinct trajectory observed $N+1$ times, at constant inter-observation times $\Delta t=0.07$. }
\label{fig:fig}
\end{figure}

In a second experiment, we fix $N=30$ and analyse properties of the estimators for $M=1,\ldots,150$, for an LBDP with $\lambda=7$ and $\mu=5$ starting with different initial population sizes $Z(0)=Z_0=1, 10,20$. The bias and RMSE of $\hat{\lambda}$ are presented in Figure~\ref{fsim3}. The quality of the estimators improves rapidly as $M$ increases. The value of $Z_0$ has a clear impact on the quality of the GW estimator, and less impact on that of the SPMLE. We further investigate the effect of $Z_0$ in the RMSE of both estimators for different values of $N$, $M$, and the model parameters. We summarise the results in Table~\ref{tsim1}, which confirms the key role played by $Z_0$ in the quality of the GW estimator, which  becomes comparable to the SPMLE as $Z_0$ increases. For the same value of $M,N$ and $Z_0$, the quality of the estimators increases as the process moves away from criticality ($\omega=0$), as suggested when $m\to 1$  in Theorem \ref{lambda&mu_gw_dist}.

\begin{figure}[t]
\begin{subfigure}{.54\textwidth}
  \centering
  \includegraphics[width=0.92\linewidth]{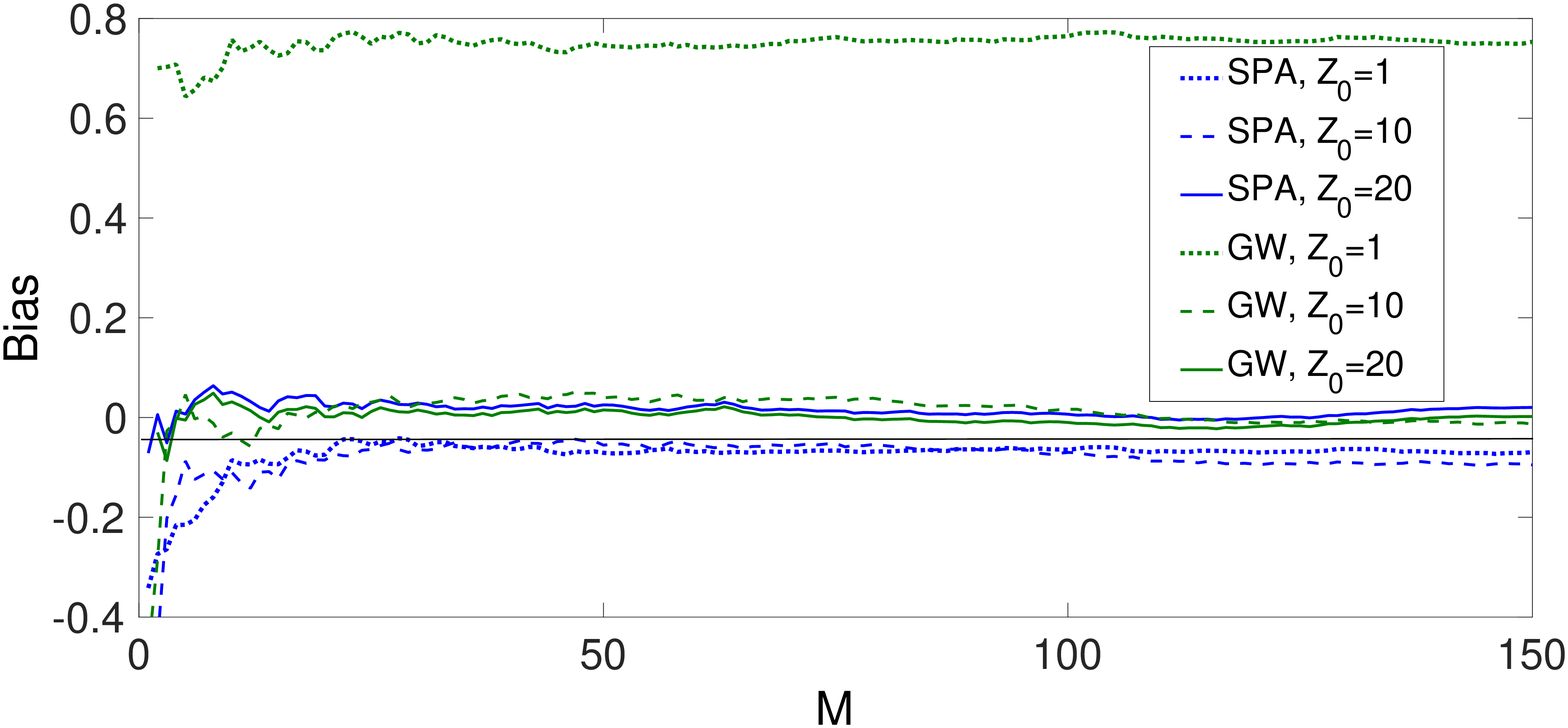}
  \label{fig:sfig1}
\end{subfigure}%
\begin{subfigure}{.54\textwidth}
  \centering
  \includegraphics[width=0.92\linewidth]{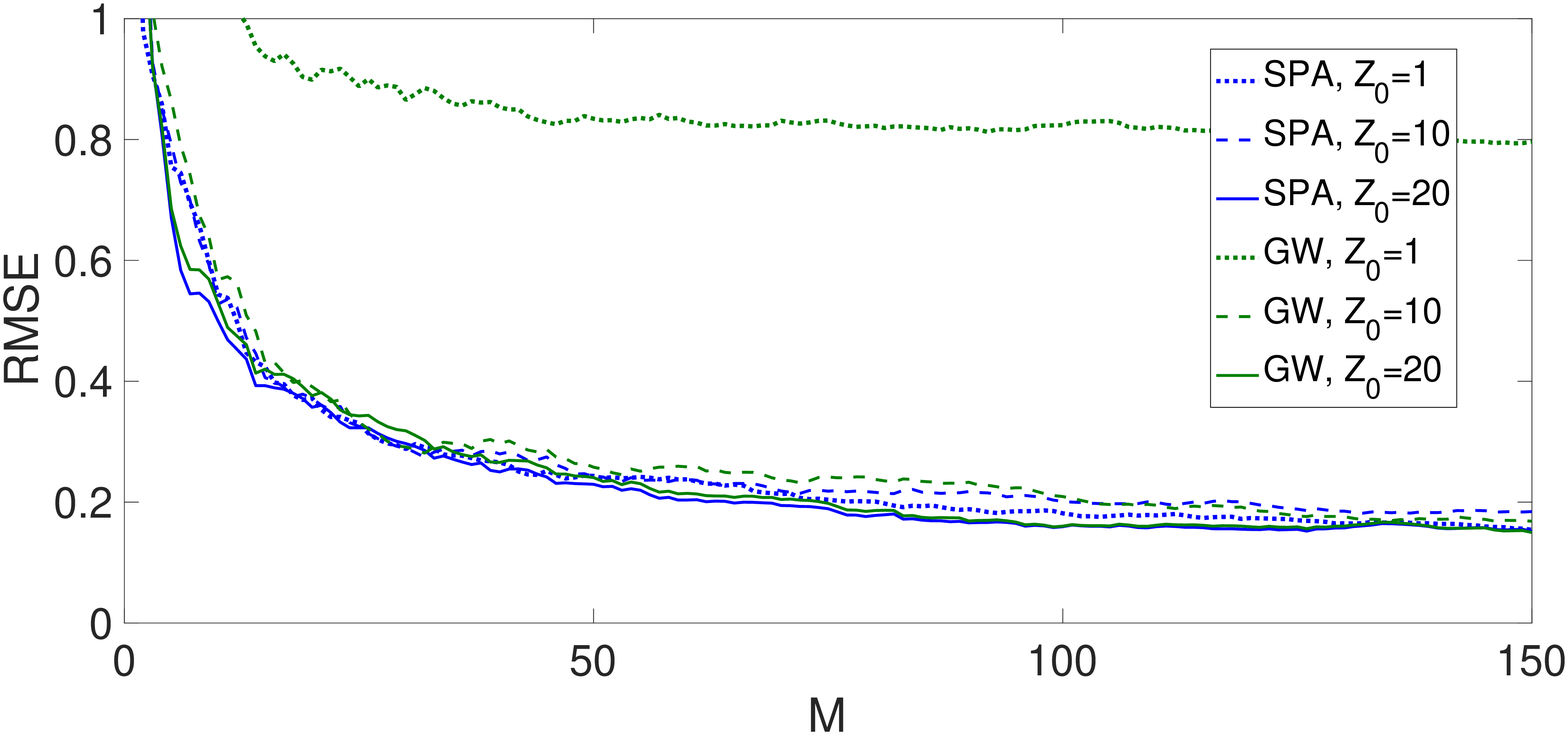}
  \label{fig:sfig2}
\end{subfigure}
\caption{\label{fsim3} Properties of $\hat{\lambda}$ as $M$ increases: Bias and root mean square error for an LBDP with $\lambda=7, \mu=5$, $Z(0)=Z_0=1,10,20$. Results based on 100 simulations of $M$ non-extinct trajectories observed $N+1=30$ times at constant inter-observation times $\Delta t=1/10$. }
\end{figure}

\begin{table}[t]
\small{
\centering
$M=1$,  $\mu=6$, $\omega=1$ 

\medskip
\begin{tabular}{c|ccccc}
& $Z_0=1$ & $Z_0=5$ & $Z_0=10$ & $Z_0=20$ & $Z_0=50$\\ \hline
SPMLE 	&  2.40 & 2.44& 2.40 & 2.45 & 2.48\\
GW & 5.03 & 3.38 & 2.82 & 2.68 & 2.56
\end{tabular}

\bigskip
\smallskip

 \qquad\qquad$M=1$, $\mu=5$, $\omega=2$   \qquad \qquad\qquad\qquad  $M=20$,  $\mu=5$, $\omega=2$
\medskip

\begin{tabular}{c|cccc|cccc}
&  $Z_0=1$ & $Z_0=5$ & $Z_0=10$ & $Z_0=20$ &$Z_0=1$ & $Z_0=5$ & $Z_0=10$ & $Z_0=20$ \\ \hline
SPMLE 	&  1.63 & 1.59& 1.57 & 1.71 &   0.39 & 0.35& 0.35 & 0.36\\
GW & 2.15 & 1.75 & 1.64 & 1.62  &  0.62 &0.40 & 0.36  & 0.36
\end{tabular}}
\normalsize

%
\caption{\label{tsim1}Root mean square error of $\hat{\lambda}$ (true value $\lambda=7$) based on 5000 simulations of $M$ non-extinct trajectories observed $N+1=30$ times at constant inter-observation times $\Delta t=1/10$ for different values of $Z_0$. }
\end{table}

\section{Applications}
We use the GW and MLE approaches to 
estimate the birth and death rates in two bird populations and the transmission and removal rates for an influenza epidemic. In each case a single trajectory ($M=1$) of the population is observed at equally-spaced times.

\begin{figure}
\begin{center}
\includegraphics[width=9cm]{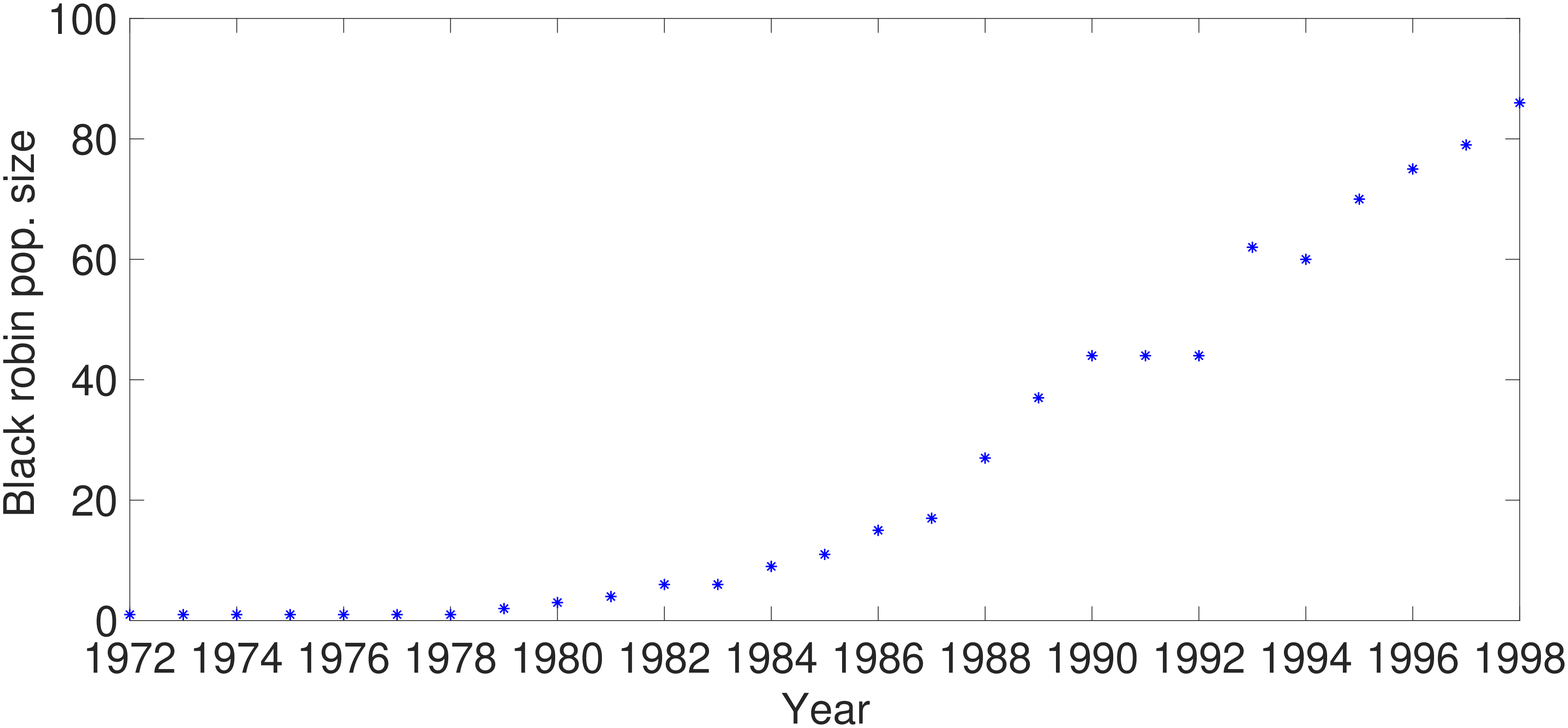}\\
\includegraphics[width=9cm]{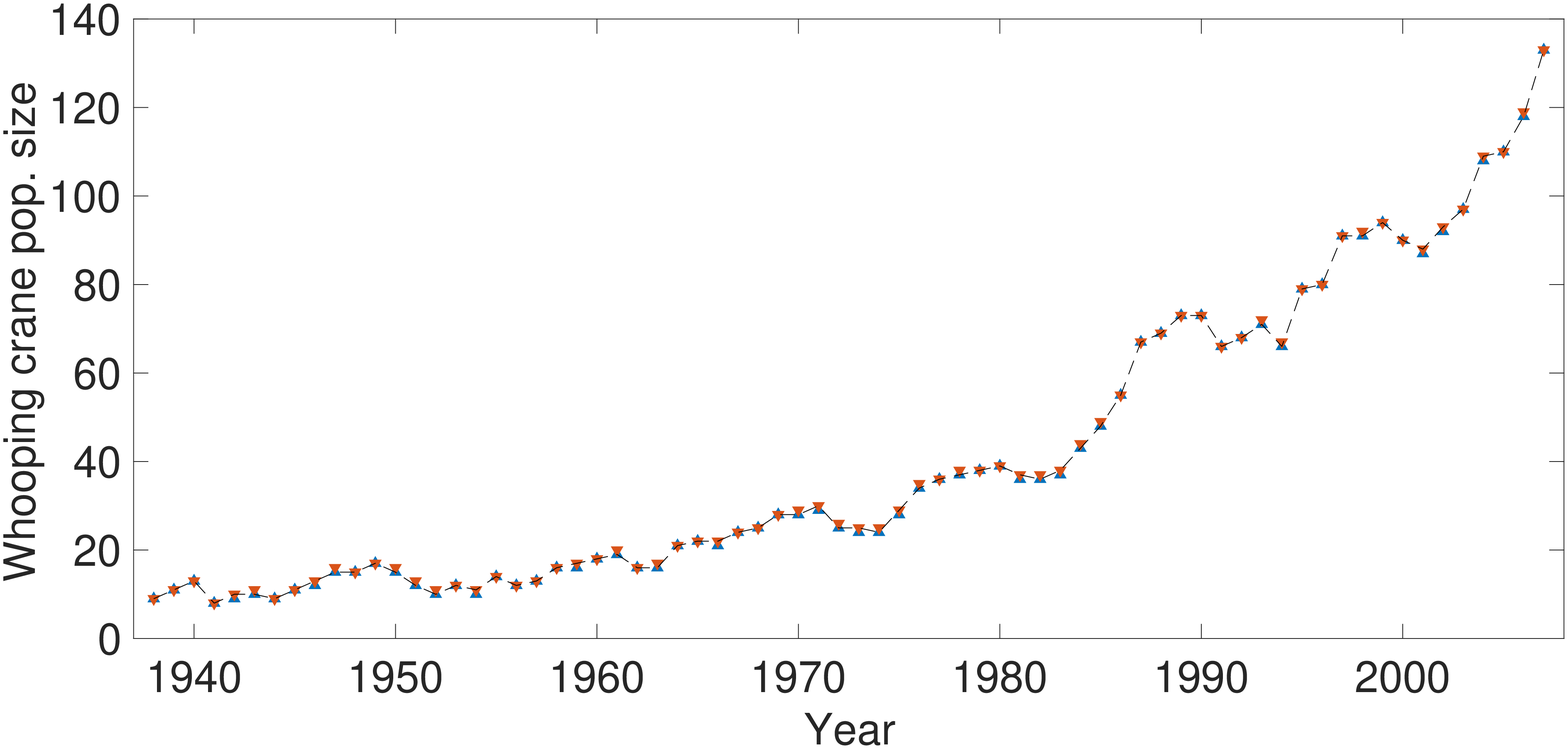}\\
\vspace{2mm}
\includegraphics[width=9cm]{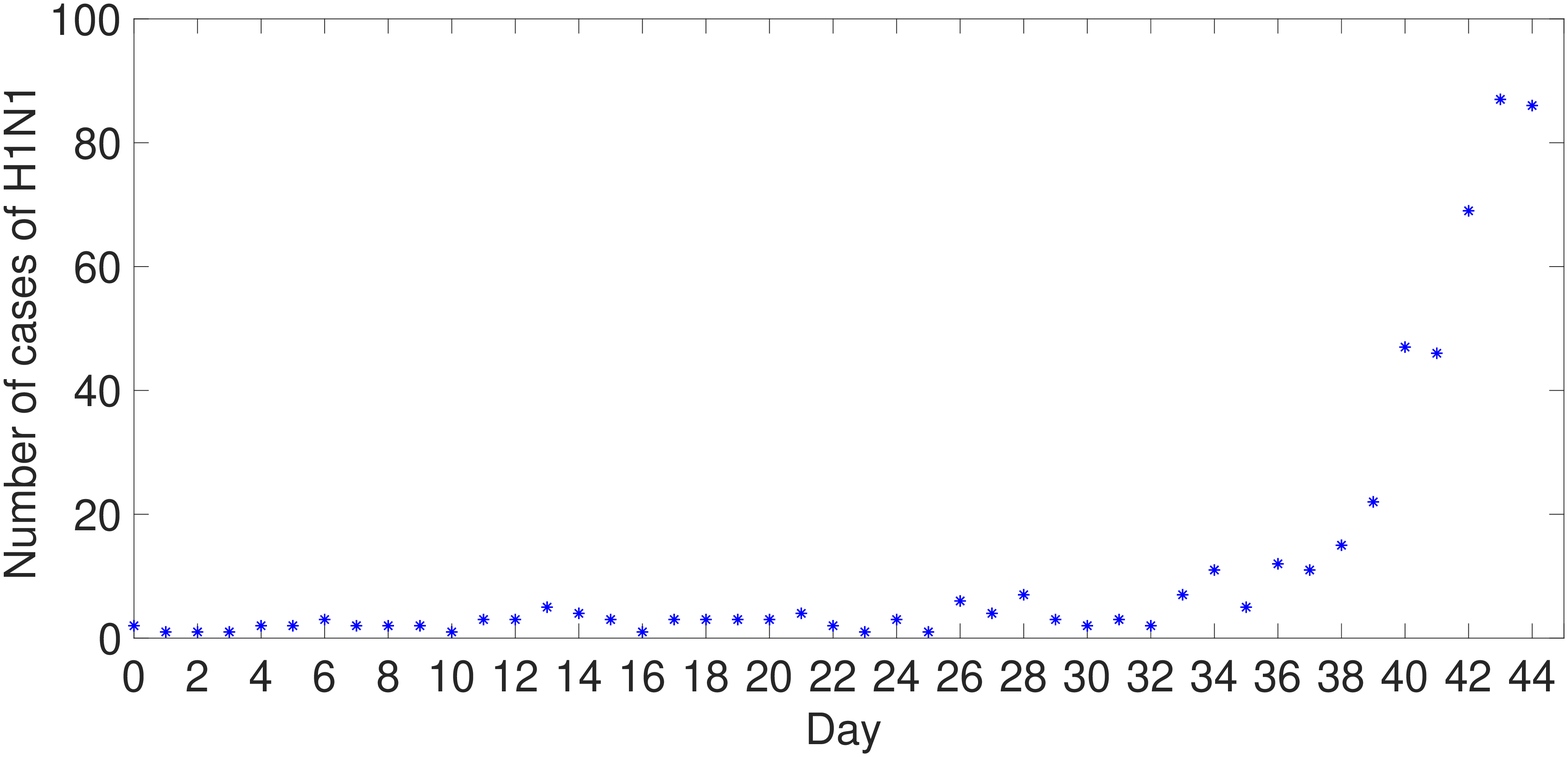}
\end{center}
\vspace{-0.3cm}
\caption{\label{fbr}Data for examples.  \emph{Top}: Yearly black robin population census, 1972--1998, corresponding to females who survived up to at least one year. \emph{Middle:}  Yearly population census of the young and adult whooping cranes arriving in Texas each autumn,  1938--2007. Counts correspond to the number of females, obtained by dividing the total size by two (assuming a 1:1 sex ratio), and are rounded up ({\color{red}$\blacktriangledown$}) or down ($\blacktriangle$) when the value is non-integer. The black line represents a uniform choice between the upper and lower values, which forms our data. \emph{Bottom:} Daily counts ($+1$) of new cases during the H1N1 outbreak in Mexico from 11 March 2009 (Day 0) to 24 April 2009 (Day 44).  }
\end{figure}

\subsection{Black robin population}

The black robin \textit{Petroica traversi} is an endangered songbird species endemic to the Chatham Islands. By 1980, the population had declined to five birds, including only one successful breeding pair \cite{elliston1994black}.  Through intensive conservation efforts in 1980--1989 by the New Zealand Wildlife Service (now the Department of Conservation), the population recovered to 93 birds by spring 1990  \cite{kennedy2014severe,massaro2013human}.  Over the next decade (1990--1998), the population was closely monitored, but without human intervention, and grew to 197 adults by 1998 \cite{kennedy2009}. After this period its growth slowed considerably and it only reached 239 adults in 2011 \cite{massaro2013nest}  and 298 in 2014. 

We fit an LBDP to the yearly censuses of the female black robin population between 1972 and 1998 ($N=26$).
The data are presented in Figure~\ref{fbr}, and the estimates are shown in Table~\ref{t_appBR}. Due to relatively large population sizes, obtaining the  MLE was numerically problematic.
The computation of the GW estimators is  305 times faster than for the SPMLEs, whose computation is iteself 167 times faster  on average than the MLEs. The GW estimates are very close to the MLEs, but the SPMLEs are a little different owing to small population sizes in the early years. The adjusted SPMLEs (correcting for the smoothing effect around small population sizes) show a substantial improvement.
 
We conclude that a one-year-old female bird has a life expectancy of $1/0.19=5.26$ years and gives birth on average to $0.318/0.19=1.67$ female offspring reaching an age of one year; these are reasonable estimates. Assuming constant birth and death rates, the estimated probability of extinction for the black robin is $0.19/0.318=0.6$ (SE $0.018$).


\begin{table}
\centering
\small{
\begin{tabular}{l|cccc}
&GW estimates & SPMLE& Adjusted SPMLE&  MLE \\\hline
$\hat{\lambda}$ &0.319 (0.071) & 0.301 (0.073) & 0.319 (0.072)   & 0.318 (0.072) \\
$\hat{\mu}$ &0.191 (0.071) &0.173 (0.072) & 0.191 (0.071)  &0.190 (0.071) \\
$\hat{\omega}$& 0.128 (0.028) & 0.128 (0.027)& 0.128 (0.028) & 0.128 (0.028) \\\hline CPU time (sec)  & $2.67\,10^{-4}$ & 0.081 &1.35 & 13.61
\end{tabular}}
\caption{\label{t_appBR}Estimates (and standard errors based on Theorem \ref{lambda&mu_gw_dist} for the GW estimates, and on Fisher Information for the SPMLE and the MLE) of the birth, death, and growth rates of the black robin population between 1972 and 1998 from the GW, SPMLE and MLE approaches, and average CPU times.}
\end{table}

\subsection{Whooping crane population}
The whooping crane is a rare migratory bird that breeds in northern Canada and winters in Texas. Stratton \cite{stratton} provides annual counts of whooping cranes arriving in Texas during autumn from 1938--2007 ($N=69$). LBDPs were used to model their population between 1938 and 1972 in Miller \textit{et al.} \cite{miller1974whooping} and in Guttorp \cite[p.47]{guttorp91}, but as the original dataset includes both males and females, dependences and mating make it unlikely that an LBDP fits properly. We therefore model the female population only, assuming a sex ratio of 1:1; see Figure~\ref{fbr}, and  Table~\ref{t_appWC2}. 
Again due to relatively large population sizes, the computation of the  MLEs caused numerical issues. The SPMLEs are closer to the  MLEs than to the GW estimates. The adjusted SPMLE gives no strong improvement because the population sizes are not close to zero. The estimated birth and death rates seem more realistic than those obtained in \cite{guttorp91} and give an  estimated extinction probability of $0.149/0.193=0.772$ (SE $0.002$).



\begin{table}
\centering
\begin{tabular}{l|cccc}
&GW estimates & SPMLE& Adjusted SPMLE&  MLE \\\hline
$\hat{\lambda}$ &0.186 (0.023) & 0.195 (0.031) & 0.195 (0.030)  & 0.193 (0.030) \\
$\hat{\mu}$ &0.142 (0.023) &0.152 (0.031)& 0.152 (0.030)  &0.149 (0.030) \\
$\hat{\omega}$& 0.044 (0.011) & 0.044 (0.011)& 0.044 (0.011) & 0.043 (0.011) 
\end{tabular}
\caption{\label{t_appWC2}Estimates (and standard errors computed as in Table \ref{t_appBR}) of the birth, death, and growth rates of the whooping crane female population.} 
\end{table}

\subsection{H1N1 influenza outbreak}

Finally, we apply our methods to the initial stage of the H1N1 pandemic in 2009 in Mexico,  using daily counts of new cases reported between the outbreak on 11~March and 24 April, when educational institutions in Mexico City were shut ($N=44$) \cite{fraser2009pandemic}. See Figure~\ref{fbr} and Table~\ref{t_appH1N1}. The SPMLEs are again closer to the MLEs than are the GW estimates, and adjusting for the smoothing effect again improves the result considerably. The estimated basic reproduction number is $\hat{R}_0=0.841/0.658\approx 1.28>1$, consistent with the Bayesian results of Fraser \textit{et al.} \cite{fraser2009pandemic}.  
The estimated Malthusian parameter, $0.044$, is lower than the estimate of $0.199$ obtained by Kraus and Panaretos \cite{andrea13} under the assumption of under-reporting. We estimate the expected length of the infectious period to be $1/0.15\approx 6.67$ days, which appears reasonable, since it is believed to be until five to seven days after the symptoms appear.


\begin{table}
\centering
\begin{tabular}{l|cccc}
&GW estimates & SPMLE&Adjusted SPMLE&   MLE \\\hline
$\hat{\lambda}$ &1.067 (0.208) & 0.952 (0.190) &0.883 (0.139)  & 0.841 (0.144) \\
$\hat{\mu}$ &0.884 (0.208) &0.770 (0.189) &0.701 (0.140) &0.658 (0.143) \\
$\hat{\omega}$& 0.182 (0.065) & 0.182 (0.061) & 0.182 (0.059) & 0.182 (0.057) 
\end{tabular}
\caption{\label{t_appH1N1}Estimates (and standard errors computed as in Table \ref{t_appBR}) of the birth (transmission), death (removal), and growth rates of the population infected by H1N1.} 
\end{table}

\bibliographystyle{agsm}


\section*{Acknowledgements}
 Sophie Hautphenne thanks the Australian Research Council for support through Discovery Early Career Researcher Award DE150101044. Andrea Kraus thanks the Czech Science Foundation for support through Grant GJ17-22950Y. The authors also thank Simon Tavar\'e and Phil Pollett for fruitful discussions.

\bigskip
\begin{center}
{\large\bf APPENDIX A: Proofs of theorems}
\end{center}

%
%
%
%


\noindent\textit{Proof of Theorem \ref{lambda&mu_gw_dist}.}
Consistency of $\hat\lambda_{Z_0, N}$ and $\hat\mu_{Z_0, N}$ follows readily from consistency of $\hat m_{Z_0, N}$ and $\widehat{\sigma^2}_{Z_0,N}$. 
To show asymptotic normality, we consider
\small{\begin{eqnarray}\nonumber
\hat\lambda_{Z_0, N}-\lambda
& = & 
\frac{\log (\hat m_{Z_0, N})}{2\Delta t} \left(\frac{\widehat{\sigma^2}_{Z_0, N}}{\hat m_{Z_0, N}(\hat m_{Z_0, N}-1)} +1 \right)
-
\frac{\log (m)}{2\Delta t} \left(\frac{\sigma^2}{m(m-1)} +1 \right)
\\
\nonumber & = & 
\frac{1}{2\Delta t}\times
\left\{
\log (\hat m_{Z_0, N}) 
-
\log (m)
\right\}
+
\frac{1}{2\Delta t}\times
\frac{\log (\hat m_{Z_0, N})}{\hat m_{Z_0, N}(\hat m_{Z_0, N}-1)}\times 
\left\{
\widehat{\sigma^2}_{Z_0, N} - \sigma^2
\right\}
\\\label{diff_lam}
& & \quad\quad + \quad
\frac{\sigma^2}{2\Delta t}\times 
\left\{
\frac{\log (\hat m_{Z_0, N})}{\hat m_{Z_0, N}(\hat m_{Z_0, N}-1)}
-
\frac{\log(m)}{m(m-1)}
\right\},
\end{eqnarray}}\normalsize
and similarly, we obtain 
\begin{eqnarray}\label{diff_mu}
\hat\mu_{Z_0, N} - \mu
& = & (\hat\lambda_{Z_0, N}-\lambda) - \frac{1}{\Delta t}\times
\left\{
\log (\hat m_{Z_0, N}) 
-
\log (m)
\right\}.
\end{eqnarray}
To obtain the result for $N\to\infty$, we recall~\eqref{m_gw_dist} and apply the delta method with $g_1(x)=\log(x)$ and $g_2(x) = \frac{\log(x)}{x(x-1)}$ to derive that
\begin{eqnarray*}
\sqrt{\sum_{n=1}^N Z_{n-1}}\, \left\{\log(\hat m_{Z_0, N}) - \log(m) \right\} 
& \xrightarrow[]{d} &
\mathcal{N}
\left(
0, \frac{\sigma^2}{m^2} 
\right)
,
\\
\sqrt{\sum_{n=1}^N Z_{n-1}}\, \left\{\frac{\log(\hat m_{Z_0, N})}{\hat m_{Z_0, N}(\hat m_{Z_0, N}-1)} - \frac{\log(m)}{m(m-1)} \right\} 
& \xrightarrow[]{d} &
\mathcal{N}
\left(
0, \sigma^2\, f(m)
\right)
\end{eqnarray*}
conditionally on $\{ Z_N\to\infty\}$ as $N\to\infty$,
where
$ f(m)=\{m-1-(2m-1)\log(m)\}^2/\{m^4(m-1)^4\}.$
Rewriting \eqref{diff_lam} as
\begin{small}
\begin{eqnarray}\nonumber\lefteqn{
2\Delta t\,\sqrt{N}\, \left( \hat\lambda_{Z_0, N} - \lambda \right)}\\
& = & \nonumber
\sqrt{\frac{N}{\sum_{n=1}^N Z_{n-1}}}\times\sqrt{\sum_{n=1}^N Z_{n-1}}
\left\{
\log (\hat m_{Z_0, N}) 
-
\log (m)
\right\}
\\\nonumber
& & + \quad
\sqrt{N}\,
\frac{\log (\hat m_{Z_0, N})}{\hat m_{Z_0, N}(\hat m_{Z_0, N}-1)}\times 
\left\{
\widehat{\sigma^2}_{Z_0, N} - \sigma^2
\right\}
\\
& &\label{diff_lam2}  + \quad
\sqrt{\frac{N}{\sum_{n=1}^N Z_{n-1}}}\,\sqrt{\sum_{n=1}^N Z_{n-1}}
\left\{
\frac{\log (\hat m_{Z_0, N})}{\hat m_{Z_0, N}(\hat m_{Z_0, N}-1)}
-
\frac{\log(m)}{m(m-1)}
\right\}
,\qquad
\end{eqnarray}
\end{small}and observing that
$N/(\sum_{n=1}^N Z_{n-1})
\to 0$ a.s. as $N\to\infty$ on  $Z_N\to\infty$ 
\cite{guttorp91}, we obtain that
the first and the third summand on the right-hand side of \eqref{diff_lam2} tend to zero in probability conditionally on $\{Z_N\to\infty\}$ as $N\to\infty$.
For the middle term, we use consistency of $\hat m_{Z_0, N}$ and~\eqref{sig_gw_dist} to derive that
\[
\sqrt{N}\,
\frac{\log (\hat m_{Z_0, N})}{\hat m_{Z_0, N}(\hat m_{Z_0, N}-1)}\times 
\left\{
\widehat{\sigma^2}_{Z_0, N} - \sigma^2
\right\}
\xrightarrow[]{d}
\mathcal{N}
\left(
0,
\frac{\{\log (m)\}^2}{m^2(m-1)^2}\, 2\sigma^4
\right)
\]
conditionally on $\{Z_N\to\infty\}$ as $N\to\infty$.
It follows that
\[
\sqrt{N}\, \left( \hat\lambda_{Z_0, N} - \lambda \right)
\xrightarrow[]{d}
\mathcal{N}
\left(
0,
\frac{\{\log (m)\}^2 \sigma^4}{2(\Delta t)^2 m^2(m-1)^2}
\right)
\]
conditionally on $\{Z_N\to\infty\}$ as $N\to\infty$.
Moreover, by \eqref{diff_mu},
\[
\sqrt{N}\, \left( 
\begin{matrix}
\hat\lambda_{Z_0, N} - \lambda  \\
\hat\mu_{Z_0, N} - \mu 
\end{matrix}
\right)
\xrightarrow[]{d}
\mathcal{N}
\left\{
\left(
\begin{matrix}
0 \\
0 \\
\end{matrix}
\right)
,
\frac{\{\log (m)\}^2 \sigma^4}{2(\Delta t)^2 m^2(m-1)^2}
\times
\left(
\begin{matrix}
1 & 1 \\
1 & 1 \\
\end{matrix}
\right)
\right\}
\]
conditionally on $\{Z_N\to\infty\}$ as $N\to\infty$.

To obtain the asymptotic result for $N\to\infty$ and $Z_0\to\infty$, we recall~\eqref{m&sig_gw_dist} and again apply the delta method with $g_1(x)$ and $g_2(x) $, leading to
\begin{eqnarray*}
\sqrt{Z_0\,(m^N-1)}\, \left\{\log(\hat m_{Z_0, N}) - \log(m) \right\} 
& \xrightarrow[]{d} &
\mathcal{N}
\left(
0, \sigma^2\, \frac{(m-1)}{m^2} 
\right),
\\
\sqrt{Z_0\,(m^N-1)}\, \left\{\frac{\log(\hat m_{Z_0, N})}{\hat m_{Z_0, N}(\hat m_{Z_0, N}-1)} - \frac{\log(m)}{m(m-1)} \right\} 
& \xrightarrow[]{d} &
\mathcal{N}
\left(
0, \sigma^2\, (m-1) f(m)
\right), 
\end{eqnarray*}
conditionally on $\{ Z_N\to\infty\}$ as $Z_0\to\infty$ and $N\to\infty$.
Rewriting \eqref{diff_lam} as
\begin{small}
\begin{eqnarray}\nonumber\lefteqn{
2\Delta t\,\sqrt{N}\, \left( \hat\lambda_{Z_0, N} - \lambda \right)
}\\& = & \nonumber
\sqrt{\frac{N}{Z_0\,(m^N-1)}}\,\sqrt{Z_0\,(m^N-1)}
\left\{
\log (\hat m_{Z_0, N}) 
-
\log (m)
\right\}
\\\nonumber
& &  + \quad
\sqrt{N}\,
\frac{\log (\hat m_{Z_0, N})}{\hat m_{Z_0, N}(\hat m_{Z_0, N}-1)}\times 
\left\{
\widehat{\sigma^2}_{Z_0, N} - \sigma^2
\right\}
\\
& & \label{diff_lam3} + \quad
\sqrt{\frac{N}{Z_0\,(m^N-1)}}\,\sqrt{Z_0\,(m^N-1)}
\left\{
\frac{\log (\hat m_{Z_0, N})}{\hat m_{Z_0, N}(\hat m_{Z_0, N}-1)}
-
\frac{\log(m)}{m(m-1)}
\right\},\qquad
\end{eqnarray}
\end{small}and observing that 
$N/Z_0 (m^N-1)\to 0$ 
as $N\to\infty$, and also as $N\to\infty$ and $Z_0\to\infty$, we obtain that
the first and the third summands on the right-hand side of \eqref{diff_lam3} tend to zero in probability conditional on $\{Z_N\to\infty\}$ as $N\to\infty$ and $Z_0\to\infty$.
For the middle term, using consistency of $\hat m_{Z_0, N}$, \eqref{m&sig_gw_dist}, and the same argument as earlier when $N\to\infty$, we arrive at the same result as $N\to\infty$ and $Z_0\to\infty$, conditionally on $\{Z_N\to\infty\}$.
\hfill $\square$

\bigskip

\noindent \textit{Proof of Lemma \ref{explicit}.}
We show the result for the case $\lambda\neq \mu$, the case $\lambda= \mu$ being similar. 
Setting $K(x,t):=K(x,t;1)$, the explicit expression for $f(s,t)$ leads to
\begin{eqnarray*}\frac{\partial}{\partial x}K(x,t)&=&\frac{\partial}{\partial x} \log f(\exp(\omega x),t)\;=\;f(\exp(\omega x),t)^{-1}\, \frac{\partial}{\partial s}f(s,t)\Big|_{s=\exp(\omega x)} \,\exp(\omega x) \qquad\\[1em]&=& \dfrac{m(t) (\lambda - \mu)^2 s}{\{\mu-\lambda s +\lambda(s-1)\, m(t)\}\{\mu-\lambda s +\mu(s-1)\, m(t)\} }\Big|_{s=\exp(\omega x)}. \end{eqnarray*} Solving for the saddle point $\tilde{x}=\tilde{x}(k,t;a)$ satisfying \eqref{saddle_equ1} then reduces to solving for the unique solution $\tilde{s}=\tilde{s}(k,t;a)$ lying in $[0,R(t))$ to the second degree equation
\begin{equation}\label{spe_bd}\dfrac{m(t) (\lambda - \mu)^2 s}{\{\mu-\lambda s +\lambda(s-1)\, m(t)\}\{\mu-\lambda s +\mu(s-1)\, m(t)\} }=\dfrac{k}{a},\end{equation}
 and setting $\tilde{x}=\log \tilde{s}$. 
 Equation \eqref{spe_bd} can be rewritten as $Q(s):=A s^2+B s +C=0$ with solutions $s_{\pm}=(-B\pm \sqrt{B^2-4 AC})/(2A)$, where 
 \begin{eqnarray*}A:=A(t)&=&\lambda\{m(t)-1\}\{\lambda-\mu m(t)\},\\
B:=B(k,t;a)&=&2\lambda\mu\{1+m(t)^2-m(t)-(a/k)m(t)\}+m(t)(\lambda^2+\mu^2)\{(a/k)-1\},\\
 C:=C(t)&=&\mu\{m(t)-1\}\{\mu-\lambda m(t)\}.\end{eqnarray*}
 If $A>0$, then $s_{-}\leq s_{+}$; in addition, we have $Q(0)=C<0$ (to see this, note that $R(t)=\{\lambda m(t)-\mu\}/\{\lambda(m(t)-1)\}>0$), which implies that $s_{-}<0$ and therefore $s_{+}$ is the saddlepoint. If $A>0$, then $s_{+}\leq s_{-}$; in addition, we have $Q(R(t))=(a/k) m(t) (\lambda-\mu)^2 \,R(t)>0$, which implies that $s_{-}>R(t)$ and therefore $s_{+}$ is the saddlepoint. We conclude that $\tilde{s}=s_{+}$.
 
 Finally, setting $K''(x,t):=\partial^2 K(x,t)/\partial x^2$, we have
 \begin{eqnarray*}K''(\tilde{x},t)=&=&-f(\tilde{s},t)^{-2}\, \left\{\frac{\partial}{\partial s}f(s,t)\Big|_{s=\tilde{s}} \,\tilde{s}\right\}^2+f(\tilde{s},t)^{-1}\, \frac{\partial^2}{\partial s^2}f(s,t)\Big|_{s=\tilde{s}} \,\tilde{s}^2\\&&+f(\tilde{s},t)^{-1}\, \frac{\partial}{\partial s}f(s,t)\Big|_{s=\tilde{s}} \,\tilde{s}\\[1em]&=&-\frac{\{m(t)-1\} m(t) \tilde{s} (\lambda-\mu)^2 \left\{-\lambda^2 \tilde{s}^2+\lambda m(t) \mu \left(\tilde{s}^2-1\right)+\mu^2\right\}}{\{\lambda [m(t) (\tilde{s}-1)-\tilde{s}]+\mu\}^2 \{\lambda \tilde{s}+\mu [-m(t) \tilde{s}+m(t)-1]\}^2} .\end{eqnarray*}
 Using \eqref{spa_bd_a} and the above expressions, we then obtain \eqref{spa_bd3}.
 \hfill $\square$

\bigskip

\noindent\emph{Proof of Lemma \ref{SPA_GW}}. Let $\Delta t$ be the inter-observation time.  Using \eqref{spaGauss} and the fact that $m(\Delta t;a)=a m(\Delta t;1):=a m$ and $\sigma^2(\Delta t;a)=a \sigma^2(\Delta t;1):=a \sigma^2$, we obtain an approximation for saddlepoint log-likelihood function \eqref{spall}:
\begin{small}
\[
\tilde{\tilde{\ell}}(\lambda,\mu; \vc t,\vc k)= -\frac{1}{2}\sum_{i=1}^M\sum_{j=1}^N  \log(2\pi \sigma^2 k_{i, j-1} ) -\frac{1}{2\sigma^2}\sum_{i=1}^M \sum_{j=1}^N \dfrac{(k_{i, j}-k_{i, j-1}\, m)^2}{k_{i, j-1}},
\] 
\end{small}where $m$ and $\sigma^2$ are functions of $\lambda$ and $\mu$.
We then have
\begin{eqnarray*}
\frac{\partial}{\partial m}\tilde{\ell}(\lambda,\mu; \vc t,\vc k)
& = & 
 \frac{1}{\sigma^2}\sum_{i=1}^M\sum_{j=1}^N (k_{i, j}-k_{i, j-1} m),
\\
\frac{\partial}{\partial \sigma^2}\tilde{\ell}(\lambda,\mu; \vc t,\vc k)
& = & 
-\frac{N M}{2\sigma^2} + \frac{1}{2\sigma^4}\sum_{i=1}^M\sum_{j=1}^N \dfrac{(k_{i, j}-k_{i, j-1}\, m)^2}{k_{i, j-1}},
\end{eqnarray*}
so the MLEs of $m$ and $\sigma^2$ based on \eqref{spaGauss} are
\begin{eqnarray*}
\hat{\tilde{m}}(\Delta t;1) & = & \frac{\sum_{i=1}^M\sum_{j=1}^N k_{i, j}}{\sum_{i=1}^M\sum_{j=1}^N k_{i, j-1}};
\\
\widehat{\widetilde{\sigma^2}}(\Delta t;1)
& = & 
\frac{1}{N M}\sum_{i=1}^M \sum_{j=1}^N \dfrac{(k_{i, j}-k_{i, j-1} \hat{\tilde{m}})^2}{k_{i, j-1}} = 
\frac{1}{N M}\sum_{j=1}^N k_{i, j-1} \left(\dfrac{k_{i, j}}{k_{i, j-1}} - \hat{\tilde{m}}\right)^2,
\end{eqnarray*}
which coincide with the estimators~\eqref{m_GW_M} of the offspring mean and variance of the embedded GW process. \hfill $\square$

\bigskip

\noindent \emph{Proof of Theorem~\ref{GASPAthm}}.
We use Theorems~5.41 and~5.42 of van der Vaart \cite{vanderVaart:1998}, in which $\vc\theta$ is the vector of model parameters, $\psi_\theta(\cdot)$ is the theoretical score vector based on the log-likelihood $\ell(\vc\theta; \cdot)$, and $\Psi(\vc\theta)$ is the empirical score vector.  The proof has two main parts.   We first verify under what circumstances the conditions of Theorem~5.41 hold, and then we show that the profile log-likelihood for $\omega$ has a unique maximum.  This implies that if the score equations have a solution, it must be unique and therefore gives the consistent estimator, by Theorem~5.42. 

\paragraph{Part 1}

Consider a single replicate of the LBDP and drop the subscript $i$. 

Let $\{Z(t): t\geq 0\}$ be a stochastic process with $Z(0)=a>0$ known, and suppose that $Z(t)$ is observed subsequently at times $t_1=\tau_1$, $t_2=\tau_1+\tau_2$, etc., where all the $\tau_i>\v$; write $t_0=0$.  We abuse  notation and write $Z_j=Z(t_j)$.  Let $\F=\{\F_j, j\in\Natural\}$ denote the filtration generated by $\{Z_j, j\in\Natural\}$, conditional on $Z(0)=a$.  Conditional on $\F_{j-1}$, suppose that $Z_j$ has mean and variance  
\begin{equation}
\label{ar1.eq}
k_{j-1} \zeta_j(\omega):=k_{j-1}\exp(\omega \tau_j),\quad k_{j-1}\xi\nu_j(\omega) := k_{j-1}\xi\tau_j\zeta_j(\omega)/c(\omega\tau_j),
\end{equation}
with $c(u) = u/\{\exp(\omega u)-1\}$ strictly monotone decreasing for $u\in\Real$.  Calculations in \cite[Example~5.4]{davison2003statistical} imply that $\kappa(u) = -\log c(u)$ is strictly convex, $\kappa'(u)$ is monotone increasing with limits 0 and 1, and $\kappa''(u)>0$.

Write $\vc\theta=(\xi,\omega)$ and let $\vc\theta_0=(\xi_0,\omega_0)$ denote the value of $\vc\theta$ that generated the data; note that $\vc\theta\in\Theta=\{(\xi,\omega): \xi+\omega>0, \xi-\omega>0\}$ and that $\Theta$ is an open subset of $\Real^2$.   Below we shall write $\dot\zeta_j=\D{\zeta_j}(\omega)/\D{\omega}$ and so forth, use $\zeta_j^0$, $\dot\zeta_j^0$ and so forth to indicate quantities evaluated at $\vc\theta_0$, and use $\E_0$ to indicate expectation at $\vc\theta=\vc\theta_0$; $\E_0(Z_j)= a\exp(\omega_0 t_j)$.

Suppose that observation of $\{Z_j\}$  stops either after a finite number of generations, $N\geq 1$, or when the population becomes extinct, i.e., at time $T=\min(N, \min\{ t_j:Z_j=0\})$.  If $T$ is finite, this implies that even if $Z_T=0$, all previous values of $Z_j$ are positive.

Estimation is based on supposing that, 
 conditionally on $Z_{j-1}=k_{j-1}$ (with $k_0:=a$), $Z_j$ is normally distributed with mean and variance given by~\eqref{ar1.eq}, for $1\leq j\leq T$.
Apart from an additive constant, this gives the log-likelihood $\logL(\vc\theta)=\sum_{j=1}^T \logL_j(\vc\theta)$, with
$$
\logL_j(\vc\theta) \equiv \logL_j(\vc\theta; k_j, k_{j-1}) =  -{1\over 2}\left\{\log(k_{j-1}\xi \nu_j) + {(k_j-k_{j-1}\zeta_j)^2\over k_{j-1}\xi\nu_j}\right\}, \quad \vc\theta\in\Theta;
$$
here and below we suppress the dependence of $\zeta_j$ and $\nu_j$ on $\omega$. 
This is the true log-likelihood if the assumption of conditional normality is correct, and a quasi-likelihood for estimation of $\vc\theta$ when the mean and variance functions in~\eqref{ar1.eq} are correctly specified; the third and fourth cumulants of $(Z_j-\zeta_j^0Z_{j-1})/(\xi_0\nu_j^0Z_{j-1})^{1/2}$ conditional on $\F_{j-1}$ appear when the increments are non-normal, but these cumulants are finite by hypothesis.  

We now verify that the conditions of Theorem~5.41. 
The $2\times1$  score vector $\psi_\theta(x)$ based on $\logL(\vc\theta)$ has elements 
\begin{eqnarray}
\label{score-xi.eqn}
{\partial\logL(\vc\theta)\over\partial \xi} &=&  -{1\over 2}\sum_{j=1}^T \left\{{1\over \xi} - {(k_j-k_{j-1}\zeta_j)^2\over \xi^2 k_{j-1}\nu_j}\right\},\\
\label{score-omega.eqn}
{\partial\logL(\vc\theta)\over\partial \omega} &=&  -{1\over 2}\sum_{j=1}^T \left\{{\dot\nu_j\over\nu_j} -  {\dot\nu_j\over\nu_j}{(k_j-k_{j-1}\zeta_j)^2\over \xi k_{j-1}\nu_j} - 2{\dot\zeta_j\over \xi\nu_j} (k_j-k_{j-1}\zeta_j)\right\};
\end{eqnarray}
clearly these are twice continuously differentiable for $\vc\theta\in\Theta$ for every $z=(k_0,\ldots, k_T)$. 

To prove that the components of the score vector have zero expectations, note that 
$$
A_J := \sum_{j=1}^J \left\{{1\over \xi_0} - {(Z_j-Z_{j-1}\zeta^0_j)^2\over \xi_0^2 Z_{j-1}\nu^0_j}\right\},\qquad 1\leq J\leq N,
$$
satisfies $\E_0(A_J\mid \F_{J-1})=A_{J-1}$.  Now    
\begin{eqnarray*}
\E_0(|A_J|)&\leq &\sum_{j=1}^J \E_0\left\{\left|{1\over \xi_0} - {(Z_j-Z_{j-1}\zeta^0_j)^2\over Z_{j-1}\xi_0^2\nu^0_j}\right|\right\}\\
&\leq & {J\over \xi_0} + \sum_{j=1}^J \E_0\left\{ {(Z_j-Z_{j-1}\zeta^0_j)^2\over Z_{j-1}\xi_0^2\nu^0_j}\right\}  = 2J/\xi_0,
\end{eqnarray*}
and as $\E_0(|A_J|)<\infty$, we see that $\{A_j\}$ is a martingale with respect to $\F$.  Moreover,  $T$ is a stopping time, and as $T\leq N$, we have that $\pr_0(T<\infty)=1$, $\E(|A_T|)<\infty$ and $\E\{A_j \mathds{1}_{\{T>t\}}\}\to0$ as $t\to\infty$. Hence the optional stopping theorem \cite[Theorem 12.5.1]{Grimmett.Stirzaker:2001} applies, and thus $\E(A_T)=\E(A_1)=0$.  
This argument also applies to $\partial\logL(\vc\theta)/\partial\omega$, so $\E_0\{\partial\logL(\vc\theta^0)/\partial\xi\}=\E_0\{\partial\logL(\vc\theta^0)/\partial\omega\}=0$, as required. 

We now need to check that the covariance matrix $\vc{\mathrm{C}}(\vc\theta)$ of the score vector is finite. Conditional independence of the summands of the score yields that at $\vc\theta=\vc\theta_0$, 
\small{\begin{eqnarray*}
\var_0\left\{{\partial\logL(\vc\theta)\over\partial \xi} \right\}  &= &{1\over 4} \E_0\left[\sum_{j=1}^T \left\{{1\over \xi_0} - {(Z_j-Z_{j-1}\zeta^0_j)^2\over \xi_0^2 Z_{j-1}\nu^0_j}\right\}^2\right] ,\\
\var_0\left\{{\partial\logL(\vc\theta)\over\partial \omega} \right\}  &= &{1\over 4} \E_0\left[\sum_{j=1}^T \left\{{\dot\nu^0_j\over\nu^0_j} -  {\dot\nu^0_j\over\nu^0_j}{(Z_j-Z_{j-1}\zeta^0_j)^2\over \xi_0 Z_{j-1}\nu^0_j} - 2{\dot\zeta^0_j\over \xi_0\nu^0_j} (Z_j-Z_{j-1}\zeta^0_j)\right\}^2\right] ,\\
\cov_0\left\{{\partial\logL(\vc\theta)\over\partial \xi}, {\partial\logL(\vc\theta)\over\partial \omega}\right\}  &= &-{1\over 4\xi_0}  \E_0\left[\sum_{j=1}^T\left\{{\dot\nu^0_j\over\nu^0_j}- {\dot\nu^0_j\over\nu^0_j}{(Z_j-Z_{j-1}\zeta^0_j)^2\over \xi_0 Z_{j-1}\nu^0_j} - 2{\dot\zeta^0_j\over \xi_0\nu^0_j} (Z_j-Z_{j-1}\zeta^0_j)\right\} \right.\\
&&\qquad\qquad\qquad\quad \left. \times{(Z_j-Z_{j-1}\zeta^0_j)^2\over \xi_0 Z_{j-1}\nu^0_j}\right] ,
\end{eqnarray*}}\normalsize
and some calculation shows that $\vc{\mathrm{C}}(\vc\theta_0)$ has components 
\small{\begin{eqnarray*}
\label{cov.eq}
\var_0\left\{{\partial\logL(\vc\theta)\over\partial \xi} \right\}  &= &{1\over 2\xi^2_0}\E_0(T) + {1\over 4\xi_0^2} \E_0\left(\sum_{j=1}^T \kappa^0_{4,t}\right),\\
\var_0\left\{{\partial\logL(\vc\theta)\over\partial \omega} \right\} &=&   \E_0\left\{{1\over 4}\sum_{j=1}^T \left({\dot\nu_j^0\over \nu_j^0}\right)^2(2+\kappa^0_{4,t}) + 
{a\over \xi_0} \sum_{j=1}^T {(\dot\zeta_j^0)^2\over \nu_j^0}\exp(\omega_0 t_{j-1})\right.\\&&\qquad\qquad\qquad\quad \left. + \sum_{j=1}^T {\dot\nu^0_j\dot\zeta_j^0Z_{j-1}^{1/2}\over \{\xi_0(\nu_j^0)^3\}^{1/2}}\kappa^0_{3,t}\right\},\\
\cov_0\left\{{\partial\logL(\vc\theta)\over\partial \xi}, {\partial\logL(\vc\theta)\over\partial \omega}\right\}  &=&{1\over 4\xi_0}\E_0\left[ \sum_{j=1}^T \left\{{\dot\nu^0_j\over\nu^0_j}(2+\kappa^0_{4,t})+ 2{\dot\zeta^0_jZ_{j-1}^{1/2}\over (\xi_0\nu^0_j)^{1/2}}\kappa^0_{3,t}\right\}\right], 
\end{eqnarray*}}\normalsize
where $\kappa^0_{3,t}$ and $\kappa^0_{4,t}$ are the third and fourth cumulants of $(Z_j-\zeta_j^0Z_{j-1})/(\xi_0\nu_j^0Z_{j-1})^{1/2}$ conditional on $\F_{j-1}$, when $\vc\theta=\vc\theta_0$; these are finite by hypothesis, and since $T\leq N$, the components of $\vc{\mathrm{C}}(\vc\theta_0)$ are finite. The expressions simplify when $\kappa^0_{3,t}=\kappa^0_{4,t}\equiv0$.

To show that the information matrix $\vc{\mathrm{I}}(\vc\theta)$ exists and is non-singular, note that
\begin{eqnarray*}
{\partial^2\logL(\vc\theta)\over\partial \xi^2} &=&  -{1\over 2}\sum_{j=1}^T \left\{2  {(k_j-k_{j-1}\zeta_j)^2\over \xi^3 k_{j-1}\nu_j}- {1\over \xi^2} \right\},\\
{\partial^2\logL(\vc\theta)\over\partial\xi\partial \omega} &=&  -{1\over 2\xi^2}\sum_{j=1}^T \left\{   {\dot\nu_j\over\nu^2_j}{(k_j-k_{j-1}\zeta_j)^2\over k_{j-1}\nu_j}  + 2{\dot\zeta_j\over\nu_j} (k_j-k_{j-1}\zeta_j)\right\},\\
{\partial^2\logL(\vc\theta)\over\partial \omega^2} &=&  -{1\over 2}\sum_{j=1}^T \left\{   {\ddot\nu_j\over\nu_j} - {\dot\nu^2_j\over\nu^2_j} - {\ddot\nu_j\over\nu^2_j}{(k_j-k_{j-1}\zeta_j)^2\over \xi k_{j-1}\nu_j}  + 2{\dot\nu^2_j\over\nu^3_j} {(k_j-k_{j-1}\zeta_j)^2\over \xi k_{j-1}}\right.\\
&&\qquad\qquad\quad  + \,\left. 4{\dot\nu_j\dot\zeta_j\over \xi \nu_j^2}(k_j-k_{j-1}\zeta_j) - 2{\ddot\zeta_j\over \xi\nu_j} (k_j-k_{j-1}\zeta_j) + 2k_{j-1}{\dot\zeta_j^2\over \xi\nu_j}\right\}.
\end{eqnarray*}
On replacing $k_j$ and $k_{j-1}$ in these expressions with the corresponding random variables $Z_j$ and $Z_{j-1}$, it is straightforward to check that when $\vc\theta=\vc\theta_0$, 
\begin{eqnarray*}
\E_0\left\{-{\partial^2\logL(\vc\theta)\over\partial \xi^2} \right\} &=& {\E_0(T)\over 2\xi_0^2}, \quad \E_0\left\{-{\partial^2\logL(\vc\theta)\over\partial\xi\partial \omega}\right\}= {1\over 2\xi_0} \E_0\left(\sum_{j=1}^T {\dot\nu_j^0\over \nu_j^0}\right), \\
\E_0\left\{-{\partial^2\logL(\vc\theta)\over\partial \omega^2}\right\}&=& {1\over 2} \E_0\left\{\sum_{j=1}^T \left({\dot\nu_j^0\over \nu_j^0}\right)^2 + {2a\over \xi_0} \sum_{j=1}^T {(\dot\zeta_j^0)^2\over \nu_j^0}\exp(\omega_0t_{j-1})\right\}.
\end{eqnarray*}
If $\kappa^0_{3,t}=\kappa^0_{4,t}\equiv 0$, then $\vc{\mathrm{C}}(\vc\theta_0)=\vc{\mathrm{I}}(\vc\theta_0)$.   The two components of the score vector are linearly independent, so $\vc{\mathrm{I}}(\vc\theta_0)$ is  positive definite.  As $\vc\theta_0$ is necessarily interior to $\Theta$, there clearly exists a neighbourhood of $\vc\theta_0$ in which the third derivatives of $\logL(\vc\theta)$ are continuous in $\vc\theta$ and can be bounded by integrable functions of the data. 

We have verified the conditions for Theorem~5.41 of \cite{vanderVaart:1998}, and it follows that if we take $M$ independent copies of the above process and let $M\to\infty$, then the corresponding solutions to the score equations, if consistent, will have a limiting normal distribution with mean $\vc\theta_0$ and covariance matrix $\vc{\mathrm{I}}(\vc\theta_0)^{-1}\vc{\mathrm{C}}(\vc\theta_0)\vc{\mathrm{I}}(\vc\theta_0)^{-1}$, provided that the sequences of observation times $t_{i,1},\ldots, t_{i,T_i}$ for an infinite number of these copies are non-trivial, as is the case if all the $\tau_{i,j}>\v$ for some positive $\v$. 

\paragraph{Part 2}

We now show that the solution to the score equation $\pr_n\psi_\theta=0$ is consistent. To lighten notation we first consider a single set of data, $k_0=a,\ldots, k_T$.  Note from~\eqref{score-xi.eqn}  that setting $\partial\logL(\vc\theta)/\partial\xi=0$ implies that the unique maximum of $\logL$ with respect to $\xi$ for each $\omega$ is at
$\hat\xi_\omega= T^{-1}\sum_{j=1}^T (k_j-k_{j-1}\zeta_j)^2/(k_{j-1}\nu_j)$. Hence the overall maximum of $\logL$  in terms of $\omega$ is obtained by maximising the profile log-likelihood 
$$
\logLp(\omega) = \logL(\hat\xi_\omega,\omega) \equiv -{T\over 2}\log\hat\xi_\omega  - {1\over 2}\sum_{j=1}^T \log\nu_j \equiv - {T\over 2} \log \left\{ \sum_{j=1}^T {(k_j-k_{j-1}\zeta_j)^2\over k_{j-1}\nu_j} \right\}- {1\over 2}\sum_{j=1}^T \log\nu_j,
$$
where we have dropped irrelevant constants. If we assume that a unique value $\hat\omega$ of $\omega$ maximises $\logLp(\omega)$, then it satisfies the equation
$$
0 = {\D{} \logLp(\omega)\over\D{\omega}} =
{\partial \hat\xi_\omega\over \partial\omega} \left.{\partial \logL(\xi,\omega)\over\partial\xi}\right|_{\xi=\hat\xi_\omega} + \left.{\partial \logL(\xi,\omega)\over\partial\omega}\right|_{\xi=\hat\xi_\omega} 
={\partial \logL(\hat\xi_\omega,\omega)\over\partial\omega}.
$$
Hence solving the score equation $\pr_n\psi_\theta=0$ and maximising $\logLp(\omega)$ with respect to $\omega$ give the same solutions $\hat{\vc\theta} = (\hat\xi_{\hat\omega},\hat\omega)$.

To prove the uniqueness of $\hat\omega$, note that the second term of $-2\logLp(\omega)$  has derivative 
\begin{equation}
\label{first-deriv.eq}
{\D{}\over \D{\omega}} \sum_{j=1}^T \log\nu_j={\D{}\over \D{\omega}} \sum_{j=1}^T \left\{\log\tau_j + \omega\tau_j +\kappa(\omega\tau_j)\right\}  = \sum_{j=1}^T \left\{\tau_j + \tau_j\kappa'(\omega\tau_j)\right\},
\end{equation}
which is strictly monotone increasing but bounded, because $\kappa'(u)\in (0,1)$ for all real $u$.

If $k_j>0$, one can check that ${(k_j-k_{j-1}\zeta_j)^2/( k_{j-1}\nu_j) }= {4k_j\tau^{-1}_j}g(\omega\tau_j,b_j)$,
where $b_j=\log(k_j/k_{j-1})$ and $g(u,b) = c(u) \sinh^2\{(u-b)/2\}$, for some real $b$.  
Now $c(u)$ is positive and strictly monotone decreasing, and $\sinh^2\{(u-b)/2\}$ has a unique minimum at $u=b$, so $g(u,b)$ has a global minimum $g(b,b)=0$ at $u=b$.  Moreover 
\begin{eqnarray*}
g'(u,b) &=& a'(u)\sinh^2\{(u-b)/2\} + c(u)\sinh\{(u-b)/2\}\cosh\{(u-b)/2\}\\
& =& g(u,b) \left[c'(u)/c(u) +\cosh\{(u-b)/2\}/\sinh\{(u-b)/2\} \right]\\
& =& g(u,b) \left[\cosh\{(u-b)/2\}/\sinh\{(u-b)/2\} -\kappa'(u)\right],
\end{eqnarray*}
so $g'(u,b)=0$ when $u=b$, and since $0<\kappa'(u)<1$, we see that $g'(u,b)<0$ for all $u<b$.  For $u>b$, $\cosh\{(u-b)/2\}/\sinh\{(u-b)/2>1$, so there are no roots of $g'(u,b)$ when $u>b$.  Hence $g(u,b)$ has a unique minimum at $u=b$, and $g'(u,b)$ is strictly monotone increasing.  

If $k_j=0$, then $j=T$ and 
$$
{(k_j-k_{j-1}\zeta_j)^2\over k_{j-1}\nu_j }= {k_{j-1}\over\tau_T}\exp(\omega\tau_T)c(\omega\tau_T) = {k_{j-1}\over\tau_T}c(-\omega\tau_T),
$$
which is positive and strictly monotone increasing in $\omega$; its derivative increases from 0 to $\infty$ as $\omega$ traverses the real line. Together with the results for $t=1,\ldots, T-1$, we find that 
$T\hat\xi_\omega = \sum_{j=1}^T {(k_j-k_{j-1}\zeta_j)^2/(k_{j-1}\nu_j)}$ 
has a unique, global, minimum as a function of $\omega$. Clearly the same is true of $\log \hat\xi_\omega$, whose derivative with respect to $\omega$ is therefore monotone increasing.  As this is also true of~\eqref{first-deriv.eq}, any solution $\hat\omega$ to $\D\logLp(\omega)/\D{\omega}=0$ is unique.  

If there are $M$ independent replicates with components $k_{i,0},\ldots, k_{i,T_i}$, where $T_i$ is the stopping time for the $i$th replicate and $i=1,\ldots, M$, then the log-likelihood may be written as $\logL(\vc\theta)=\sum_{i=1}^M \sum_{j=1}^{T_i} \logL_j(\vc\theta; k_{i,j}, k_{i,j-1})$. Apart from additive constants, 
$-2\logLp(\omega)=\sum_{i=1}^M T_i \log \hat \xi_\omega  + \sum_{i=1}^M \sum_{j=1}^{T_i} \log\nu_{i,j}$, where 
$$
\hat \xi_\omega = {1\over \sum_{i=1}^M T_i} \sum_{i=1}^M \sum_{j=1}^{T_i} {(k_{i,j}-k_{i,j-1}\zeta_{i,j})^2\over k_{i,j-1}\nu_{i,j}},\quad \zeta_{i,j} = \exp(\tau_{i,j}\omega), \quad \nu_{i,j} = {\tau_{i,j}\zeta_{i,j}\over c(\omega\tau_{i,j})},
$$
and the above argument for uniqueness of $\hat\omega$ still applies. 
Under the conditions of Theorem 5.41, $\hat\omega$ is therefore a consistent estimator of $\omega$ as $M\to\infty$.  Since $\hat\xi_{\hat \omega}$ is the unique corresponding estimator of $\xi$, it too is consistent.

The continuous mapping theorem and delta method now yield the consistency and joint asymptotic normality of $\lambda=(\xi+\omega)/2$ and $\mu=(\xi-\omega)/2$, with asymptotic covariance matrix given by
\begin{equation}\label{D.eq}
\var\begin{pmatrix} \hat\lambda\\ \hat\mu\\ \end{pmatrix}=\vc{\mathrm{D}}\vc{\mathrm{I}}(\vc\theta_0)^{-1}\vc{\mathrm{C}}(\vc\theta_0)\vc{\mathrm{I}}(\vc\theta_0)^{-1}\vc{\mathrm{D}}, \quad \vc{\mathrm{D}} = \dfrac{1}{2}\begin{pmatrix} 1& 1\\ 1& -1\\\end{pmatrix}.
\end{equation}\hfill $\square$

\bigskip
\begin{center}
{\large\bf APPENDIX B: {Multivariate saddlepoint}}
\end{center}

Instead of using the decomposition of the log-likelihood as a sum of univariate probabilities in \eqref{eq2}, we can alternatively work with the multivariate form \eqref{eq1} of the log-likelihood.
For any $N$ and observation times $t_0\leq t_1\leq t_2\leq  \cdots\leq t_N$, the conditional joint distribution of the population sizes $(Z(t_1),\ldots,Z(t_N))$, given $Z(t_0)= a$, has the multivariate PGF
$$g(\vc s,\vc t;a):=
f \{s_1\,f\{s_2\, \cdots f\{s_{N-1}\,f(s_N,\tau_N),\tau_{N-1}\}\cdots,\tau_2\},\tau_1\}^{ a},$$where $\vc s=(s_1,\ldots,s_N)$ and $\tau_{j}:=t_{j}-t_{j-1}$.
Explicit expressions for the first and second partial derivatives of $g(\vc s,\vc t;a)$ with respect to the entries of $\vc s$ can be obtained but are more cumbersome than in the univariate case.

%

Similar to the univariate case, we define the multivariate probability mass function
$${p}_{\vc k}(\vc t; a):=\mathsf{P}\{(Z(t_1),\ldots,Z(t_N))=\vc k\,|\,Z(0)=a\},\quad \vc k=(k_1,\ldots,k_N)\geq \vc 0.$$ We let
$K(\vc x,t;  a)=\log g(e^{\vc x},t;a)=a K(\vc x,t; 1)$ denote the multivariate CGF corresponding to $(Z(t_1),\ldots,Z(t_N))$ conditional on $Z(0)=a$, where $\vc x=(x_1,\ldots,x_N)$ and $e^{\vc x}=(e^{x_1},\ldots,e^{x_N})$. The first and second partial derivatives of $K(\vc x,t;  a)$ are  \begin{eqnarray*}K'(\vc x,\vc t; a)&:=&\left(\frac{\partial K(\vc x, \vc t; a)}{\partial x_1},\ldots,\frac{\partial K(\vc x,\vc t; a)}{\partial x_N}\right)^\top,\\
K''(\vc x,\vc t;a)&:=&\left(\frac{\partial^2 K(\vc x,\vc t; a)}{\partial  x_i\partial x_j}: i,j=1,\ldots, N\right),\end{eqnarray*}and can be expressed explicitly. 
The saddlepoint approximation to $p_{\vc k}(\vc t;\vc a)$ for $\vc k>\vc 0$ is 
\begin{equation}\label{SPA_mult} \tilde{p}_{\vc k}(\vc t; a)=\dfrac{1}{(2\pi )^{N/2}|K''(\tilde{\vc x},\vc t;  a)|^{1/2}}\exp\left\{K(\tilde{\vc x},\vc t;  a)-\tilde{\vc x} \vc k\right\},\end{equation}where the saddlepoint $\tilde{\vc x}=\tilde{\vc x}(\vc k,\vc t;a)$ is the unique solution to the $N$-dimensional system of saddlepoint equations
\begin{equation}\label{saddle_equ_multi}
K'(\tilde{\vc x},\vc t; a)=\vc k
\end{equation} in the maximal convergence set $\mathcal{S}(t)$ of $g(e^{\vc x},\vc t;1)$ in a neighbourhood of $\vc 0\in \mathbb{R}^2$.
No explicit solution exists for $\tilde{\vc x}$, which must be evaluated numerically either by minimizing the exponent in \eqref{SPA_mult} or by solving \eqref{saddle_equ_multi}. If $\vc k=(k_1,\ldots,k_i,0,0,\ldots,0)$ for some $1<i\leq N-1$, then we write ${p}_{\vc k}(\vc t; a)= {p}_{\tilde{\vc k}}(\vc t; a)\, p_{0}(\tau_{i+1};k_i)$ where $\tilde{\vc k}=(k_1,\ldots,k_i)>\vc 0$, and we approximate ${p}_{\tilde{\vc k}}(\vc t; a)$ using \eqref{SPA_mult}.


Similar to the univariate case, the saddlepoint approximation error decreases as $a$ increases,
\begin{equation}\label{err_spa_mult}{p}_{\vc k}(\vc t;a)=\tilde{p}_{\vc k}(\vc t;a)\{1+\mathcal{O}(1/a)\},\end{equation}leading to the following lemma.

\begin{lemma}The error in the saddlepoint approximated log-likelihood is
\begin{eqnarray*}{\ell}(\lambda,\mu; \vc t,\vc k)-\tilde{\ell}(\lambda,\mu; \vc t,\vc k)&=& \mathcal{O}\left\{\left(\min_{1\leq i\leq M}\{k_{i,0}\}\right)^{-1}\right\}.\end{eqnarray*}
\end{lemma}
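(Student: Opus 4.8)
The plan is to follow the proof of Lemma~\ref{lem_err_1} step for step, but to decompose the log-likelihood through its multivariate form~\eqref{eq1} rather than the Markovian form~\eqref{eq2}, so that each trajectory contributes a single joint probability conditioned only on its initial size. Writing $\vc k_i=(k_{i,1},\ldots,k_{i,N})$ and $\vc t_i=(t_{i,1},\ldots,t_{i,N})$ for the $i$th trajectory, the exact log-likelihood is $\ell(\lambda,\mu;\vc t,\vc k)=\sum_{i=1}^M\log p_{\vc k_i}(\vc t_i;k_{i,0})$, and $\tilde\ell$ replaces each $p_{\vc k_i}$ by its saddlepoint approximation $\tilde p_{\vc k_i}$ from~\eqref{SPA_mult}.

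First I would write the difference as a sum of log-ratios,
\[
\ell(\lambda,\mu;\vc t,\vc k)-\tilde\ell(\lambda,\mu;\vc t,\vc k)=\sum_{i=1}^M\log\bigl\{p_{\vc k_i}(\vc t_i;k_{i,0})/\tilde p_{\vc k_i}(\vc t_i;k_{i,0})\bigr\}.
\]
Next I would apply the multivariate error bound~\eqref{err_spa_mult}, which gives $p_{\vc k_i}(\vc t_i;k_{i,0})/\tilde p_{\vc k_i}(\vc t_i;k_{i,0})=1+\mathcal{O}(1/k_{i,0})$, whence each summand equals $\log\{1+\mathcal{O}(1/k_{i,0})\}=\mathcal{O}(1/k_{i,0})$. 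Summing over the finitely many trajectories then yields $\sum_{i=1}^M\mathcal{O}(1/k_{i,0})=\mathcal{O}\{(\min_{1\leq i\leq M}k_{i,0})^{-1}\}$, which is the claim. The essential gain over the univariate bound is that only the initial sizes $k_{i,0}$ appear, rather than every intermediate size $k_{i,j-1}$: because the full joint saddlepoint approximation conditions on $Z(0)=k_{i,0}$ alone, its relative error is controlled by this single value through~\eqref{err_spa_mult}.

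The one point that needs care, and the main obstacle, is the treatment of trajectories that go extinct before the final observation, for which $\vc k_i$ has trailing zeros, say $\vc k_i=(k_{i,1},\ldots,k_{i,r},0,\ldots,0)$ with $k_{i,r}>0$. The bound~\eqref{err_spa_mult} as stated covers only strictly positive $\vc k$, so here I would invoke the factorisation $p_{\vc k_i}(\vc t_i;k_{i,0})=p_{\tilde{\vc k}_i}(\vc t_i;k_{i,0})\,p_0(\tau_{i,r+1};k_{i,r})$ described above~\eqref{err_spa_mult}, where $\tilde{\vc k}_i=(k_{i,1},\ldots,k_{i,r})>\vc{0}$ and the extinction factor $p_0=\alpha(\tau_{i,r+1})^{k_{i,r}}$ is evaluated exactly. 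Since the saddlepoint approximation is applied only to the positive block $p_{\tilde{\vc k}_i}$, whose relative error is again $\mathcal{O}(1/k_{i,0})$ by~\eqref{err_spa_mult}, while the exact factor contributes no error, the corresponding log-ratio remains $\mathcal{O}(1/k_{i,0})$ and the conclusion above is unaffected.
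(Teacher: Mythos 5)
Your proposal is correct and follows essentially the same route as the paper: decompose the log-likelihood via the multivariate form \eqref{eq1} into one joint probability per trajectory, apply the relative error bound \eqref{err_spa_mult} to each log-ratio, and sum over the $M$ trajectories. Your explicit handling of trajectories with trailing zeros via the factorisation $p_{\vc k_i}=p_{\tilde{\vc k}_i}\,p_0(\tau_{i,r+1};k_{i,r})$ is a point the paper's proof leaves implicit (it is only mentioned in the surrounding text), so your write-up is if anything slightly more complete.
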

\begin{proof}Following the same argument as in the proof of Lemma \ref{lem_err_1}, we have
\begin{eqnarray*}{\ell}(\lambda,\mu; \vc t,\vc k)-\tilde{\ell}(\lambda,\mu; \vc t,\vc k)&=&\sum_{i=1}^M \log {p}_{\vc k_{i}}(\vc t;k_{i,0})-\log \tilde{p}_{\vc k_{i}}(\vc t;k_{i,0})
\\&=&\sum_{i=1}^M \mathcal{O}(1/k_{i,0})=\mathcal{O}\left\{\left(\min_{1\leq i\leq M}\{k_{i,0}\}\right)^{-1}\right\}.\end{eqnarray*}

\end{proof}Since $$\mathcal{O}\left\{\left(\min_{1\leq i\leq M}\{k_{i,0}\}\right)^{-1}\right\}\leq \mathcal{O}\left\{\left(\min_{1\leq i\leq M,1\leq j\leq N,}\{k_{i,j-1}\}\right)^{-1}\right\},$$ the overall error in the saddlepoint approximated log-likelihood is generally smaller in the multivariate case than in the univariate case. Strict inequality is achieved as soon as the population size at the first observation time is strictly larger than at a subsequent observation time in at least one trajectory.
Depending on the observed trajectories, there could be a non-negligible gain in using the multivariate approach if the trajectories tend to decrease from their initial state, which is more likely to happen in subcritical cases. However, the drawback of the multivariate approach is the loss of tractability of the saddlepoint and the numerical errors resulting from its numerical evaluation.

Table \ref{tmult} compares the relative error between the SPMLE and the true MLE for $\lambda$ and $\mu$ using the univariate and the multivariate saddlepoint approaches, on some examples of single trajectories ($M=1$) observed at equidistant intervals with different initial population sizes $a$ and observed population size vectors $\vc k$. 
We see that the theoretical gain in approximation error is not always reflected in the numerical experiments, which may be due to a large multiplying constant in the error term of maximal order in the multivariate case.

\begin{table}
\centering
\begin{tabular}{cc|cc|cc}
$a$&$\vc k$& RE $\lambda$ uni&RE $\lambda$ multi &RE $\mu$ uni & RE $\mu$ multi
\\\hline
20 &   [13,     7 ,    6 ,    2 ,    5] & 0.0760  &  \textbf{0.0753}&0.0486 &   \textbf{0.0451}\\
10  &  [10  ,  20 ,   33,    67,    80]& 0.0217  &  \textbf{0.0216}& 0.0316  &  \textbf{0.0314}\\
30  &  [11 ,    7 ,    3   ,  5  ,   5] & \textbf{0.1134}  &  0.1520& 0.0605  &  \textbf{0.0405}\\
10   &  [6  ,   3 ,    7,     7 ,    3] &0.0870  &  \textbf{0.0851}& \textbf{0.0669}   & 0.0670\\
20  &  [16 ,   16,    10,     5 ,    8] & \textbf{0.0353} &   0.1091& \textbf{0.0258 } &  0.0835
\end{tabular}
\caption{\label{tmult}Relative error (RE) between the SPMLE and the true MLE for $\lambda$ and $\mu$ using the univariate (uni) and the multivariate (multi) saddlepoint approaches, for different initial population sizes $a$ and observed population size vectors $\vc k$. The lower errors are highlighted.}
\end{table}

Similar to the univariate case, a Gaussian approximation to the multivariate distribution of the population sizes can also be derived, and the corresponding log-likelihood can be evaluated efficiently \cite{ross2006parameter}. 


\begin{thebibliography}{10}

\bibitem{ai2006saddlepoint}
Y.~A{\i} and J.~Yu.
\newblock Saddlepoint approximations for continuous-time {M}arkov processes.
\newblock {\em Journal of Econometrics}, 134(2):507--551, 2006.

\bibitem{butler2007saddlepoint}
R.W. Butler.
\newblock {\em Saddlepoint Approximations with Applications}, volume~22.
\newblock Cambridge University Press, 2007.

\bibitem{chen2011quasi}
R.~Chen and O.~Hyrien.
\newblock Quasi-and pseudo-maximum likelihood estimators for discretely
  observed continuous-time markov branching processes.
\newblock {\em Journal of {S}tatistical Planning and Inference},
  141(7):2209--2227, 2011.

\bibitem{crawford2014estimation}
F.W. Crawford, V.N. Minin, and M.A. Suchard.
\newblock Estimation for general birth-death processes.
\newblock {\em Journal of the American Statistical Association},
  109(506):730--747, 2014.

\bibitem{crawford2012transition}
F.W. Crawford and M.A. Suchard.
\newblock Transition probabilities for general birth--death processes with
  applications in ecology, genetics, and evolution.
\newblock {\em Journal of Mathematical Biology}, 65(3):553--580, 2012.

\bibitem{daniels1954saddlepoint}
H.E. Daniels.
\newblock Saddlepoint approximations in statistics.
\newblock {\em The Annals of Mathematical Statistics}, pages 631--650, 1954.

\bibitem{daniels1982saddlepoint}
H.E. Daniels.
\newblock The saddlepoint approximation for a general birth process.
\newblock {\em Journal of Applied Probability}, 19(1):20--28, 1982.

\bibitem{darwin1956behaviour}
J.H. Darwin.
\newblock The behaviour of an estimator for a simple birth and death process.
\newblock {\em Biometrika}, 43(1/2):23--31, 1956.

\bibitem{davison2003statistical}
A.C. Davison.
\newblock {\em Statistical Models}, volume~11.
\newblock Cambridge University Press, 2003.

\bibitem{dion&yanev94}
J.P. Dion and N.M. Yanev.
\newblock Statistical inference for branching processes with an increasing
  random number of ancestors.
\newblock {\em Journal of Statistical Planning and Inference}, 39(2):329 --
  351, 1994.

\bibitem{duby&rouault82}
C.~Duby and A.~Rouault.
\newblock Estimation non param\'etrique de l'esp\'erance et de la~variance de
  la loi de reproduction d'un processus de ramification.
\newblock {\em Annales de l'Institut Henry Poincar\'e, XVIII Section B},
  18(2):149--163, 1982.

\bibitem{elliston1994black}
E.P. Elliston.
\newblock The black robin: Saving the world's most endangered bird.
\newblock {\em BioScience}, 44(7):499--501, 1994.


\bibitem{fraser2009pandemic}
C. Fraser, C.~A. Donnelly, S. Cauchemez, W.~P. Hanage, M.~D. Van~Kerkhove, T.~D. Hollingsworth, J. Griffin, R.~F. Baggaley, H.~E. Jenkins, E.~J. Lyons,
   et~al.  \newblock Pandemic potential
  of a strain of influenza A (H1N1): early findings, \newblock {\em Science}, 
  324(5934),~1557--1561, 2009.


\bibitem{Grimmett.Stirzaker:2001}
G.~R. Grimmett and D.~R. Stirzaker.   \newblock {\em {Probability and Random Processes}}, third edn, Clarendon Press, Oxford, 2001.


\bibitem{guttorp91}
P.~Guttorp.
\newblock {\em Statistical Inference for Branching Processes}.
\newblock Wiley, New York, 1991.

\bibitem{harris2002theory}
T.E. Harris.
\newblock {\em The Theory of Branching Processes}.
\newblock Courier Corporation, 2002.

\bibitem{karlin1975first}
S.~Karlin and H.M. Taylor.
\newblock A first course in stochastic processes academic.
\newblock {\em New York}, 1975.

\bibitem{keiding75}
N.~Keiding.
\newblock Maximum likelihood estimation in the birth-and-death process.
\newblock {\em Annals of Statistics}, 3(2):363--372, 1975.

\bibitem{kennedy2009}
E.~S.  Kennedy.  \newblock Extinction
  vulnerability in two small, chronically inbred populations of {C}hatham
  {I}sland black robin petroica traversi, \newblock PhD thesis, Lincoln University, New
  Zealand, 2009.

\bibitem{kennedy2014severe}
E.S. Kennedy, C.E. Grueber, R.P. Duncan, and I.G. Jamieson.
\newblock Severe inbreeding depression and no evidence of purging in an
  extremely inbred wild species -- the {C}hatham {I}sland black robin.
\newblock {\em Evolution}, 68(4):987-- 995, 2014.

\bibitem{andrea13}
A. Kraus and V.~M. Panaretos. \newblock Frequentist estimation of an~epidemic's spreading potential when
  observations are scarce., \newblock {\em Biometrika} 101(1),~141--154, 2014.

\bibitem{massaro2013human}
M. Massaro, R. Sainudiin, D. Merton, J.~V. Briskie, J.~V., A.~M. Poole, and M.~L.  Hale. \newblock
  Human-assisted spread of a maladaptive behavior in a critically endangered
  bird, \newblock {\em PloS one} 8(12),~e79066, 2013.

\bibitem{massaro2013nest}
M.~Massaro, M.~Stanbury, and J.V. Briskie.
\newblock Nest site selection by the endangered black robin increases
  vulnerability to predation by an invasive bird.
\newblock {\em Animal Conservation}, 16(4):404--411, 2013.


\bibitem{miller1974whooping}
R.~S. Miller, D.~B.  Botkin,and R. Mendelssohn. \newblock The whooping crane (Grus americana) population of
  North America, \newblock {\em Biological Conservation} 6(2),~106--111, 1974.


\bibitem{novozhilov2006biological}
A.S. Novozhilov, G.P. Karev, and E.V. Koonin.
\newblock Biological applications of the theory of birth-and-death processes.
\newblock {\em Briefings in bioinformatics}, pages 70--85, 2006.

\bibitem{pedeli2015likelihood}
X.~Pedeli, A.C. Davison, and K.~Fokianos.
\newblock Likelihood estimation for the {INAR}(p) model by saddlepoint
  approximation.
\newblock {\em Journal of the American Statistical Association},
  110(511):1229--1238, 2015.

\bibitem{ross2006parameter}
J.V. Ross, T.~Taimre, and P.K. Pollett.
\newblock On parameter estimation in population models.
\newblock {\em Theoretical Population Biology}, 70(4):498--510, 2006.

\bibitem{stratton}
D.~A.  Stratton. \newblock {Case Studies in
  Ecology and Evolution}, Technical report, University of Vermont.
\newblock Draft of a coming textbook, 2016.

\bibitem{vanderVaart:1998}
A.~W. {van der Vaart},  \newblock {\em
  {Asymptotic Statistics}}, \newblock Cambridge University Press, Cambridge, 1998.


\bibitem{xu2015likelihood}
J.~Xu, P.~Guttorp, M.~Kato-Maeda, and V.N. Minin.
\newblock Likelihood-based inference for discretely observed birth--death-shift processes, with applications to evolution of mobile genetic elements.
\newblock {\em Biometrics}, 71(4):1009--1021, 2015.

\end{thebibliography}

\end{document}